\newcommand{\be}[1]{\begin{equation}\label{#1}}
\newcommand{\ee}{\end{equation}}
\renewcommand{\(}{\left(}
\renewcommand{\)}{\right)}
\newcommand{\Email}[1]{E-mail: \href{mailto:#1}{\textsf{#1}}}
\newcommand{\eqref}[1]{(\ref{#1})}
\newcommand{\vertiii}[1]{{\left\vert\kern-0.25ex\left\vert\kern-0.25ex\left\vert #1\right\vert\kern-0.25ex\right\vert\kern-0.25ex\right\vert}}
\newcommand{\R}{{\mathbb R}}
\renewcommand{\S}{{\mathbb S}}
\newcommand{\Z}{{\mathbb Z}}
\newcommand{\nA}{\nabla_{\mathbf A}}
\newcommand{\LapA}{\Delta_{\mathbf A}}
\newcommand{\HA}{\mathcal H(\R^2)}
\newcommand{\EA}{E_{\mathbf A}}
\newcommand{\nrm}[2]{\|{#1}\|_{#2}}
\newcommand{\nrmS}[2]{\|{#1}\|_{\mathrm L^{#2}(\S^1,d\sigma)}}
\newcommand{\nrmdeux}[2]{\left\|{#1}\right\|_{\mathrm L^{#2}(\R^2)}}
\newcommand{\iS}[1]{\int_{\S^1}{#1}\,d\sigma}
\newcommand{\iC}[1]{\iint_{\R\times\S^1}{#1}\,ds\,d\sigma}
\newcommand{\ir}[2]{\int_{\R^{#1}}{#2}\,dx}
\newcommand{\is}[1]{\int_{\R}{#1}\,ds}
\newcommand{\iz}[1]{\int_{-1}^{+1}{#1}\,dz}
\newcommand{\CKNa}{\mathrm{A}}
\newcommand{\CKNb}{\mathrm{B}}
\begin{document}

\title{Symmetry results in two-dimensional inequalities for Aharonov-Bohm magnetic fields}
\titlerunning{Symmetry and Aharonov-Bohm magnetic fields}

\author{Denis Bonheure,\kern-3pt\inst1 Jean Dolbeault,\kern-3pt\inst2 Maria J.~Esteban,\kern-3pt\inst2 Ari Laptev,\kern-3pt\inst3 Michael Loss\inst4}
\authorrunning{D.~Bonheure, J.~Dolbeault, M.J.~Esteban, A.~Laptev, \& M.~Loss}

\institute{D\'epartement de math\'ematique, Facult\'e des sciences, Universit\'e Libre de Bruxelles, Campus Plaine CP 213, Bld du Triomphe, B-1050 Brussels, Belgium. \Email{dbonheur@ulb.ac.be}\and
CEREMADE (CNRS UMR n$^\circ$ 7534), PSL university, Universit\'e Paris-Dauphine, Place de Lattre de Tassigny, 75775 Paris 16, France. \Email{dolbeaul@ceremade.dauphine.fr} (J.D.), \Email{esteban@ceremade.dauphine.fr} (M.J.E)\and
Department of Mathematics, Imperial College London, Huxley Building, 180 Queen's Gate, London SW7 2AZ, UK. \Email{a.laptev@imperial.ac.uk}\and
School of Mathematics, Skiles Building, Georgia Institute of Technology, Atlanta GA 30332-0160, USA. \Email{loss@math.gatech.edu}}

\maketitle

\begin{abstract} This paper is devoted to the symmetry and symmetry breaking properties of a two-dimensional magnetic Schr\"odinger operator involving an Aharonov-Bohm magnetic vector potential. We investigate the symmetry properties of the optimal potential for the corresponding magnetic Keller-Lieb-Thir\-ring inequality. We prove that this potential is radially symmetric if the intensity of the magnetic field is below an explicit threshold, while symmetry is broken above a second threshold corresponding to a higher magnetic field. The method relies on the study of the magnetic kinetic energy of the wave function and amounts to study the symmetry properties of the optimal functions in a magnetic Hardy-Sobolev interpolation inequality. We give a quantified range of symmetry by a non-perturbative method. To establish the symmetry breaking range, we exploit the coupling of the phase and of the modulus and also obtain a quantitative result. \end{abstract}

\medskip\noindent\emph{Keywords:\/} Aharonov-Bohm magnetic potential; radial symmetry; symmetry\newline breaking; magnetic Hardy-Sobolev inequality; magnetic interpolation inequality; optimal constants; magnetic Schr\"odinger operator; magnetic rings; Caffarelli-Kohn-Nirenberg inequalities; Hardy-Sobolev inequalities.

\noindent\emph{Mathematics Subject Classification (2010):\/} Primary: 81Q10, 35Q60; Secondary: 35Q40, 81V10, 49K30, 35P30, 35J10, 35Q55, 46N50, 35J20.
\bigskip

\section{Introduction and main results}\label{Sec:Introduction}

It is a basic question in the calculus of variations whether an optimizer possesses the same symmetries as the minimized functional. The answer depends very much on the details of the functional. The only examples where a general theory is available are linear variational problems where the optimizers are eigenfunctions and hence belong to an irreducible representation of the symmetry group.

Apart when uniqueness is known, for instance in strictly convex problems, it is very difficult to determine whether the minimizer of a non-linear functional is symmetric or whether the symmetry is broken. Symmetry breaking can be shown by minimizing the functional in the class of symmetric functions and then by considering the second variation around this state. If the resulting quadratic form has a negative eigenvalue, then the symmetry is broken. Such a computation can be a formidable problem since it requires quite a bit of information about the minimizer in the symmetric class. Thus, at least in principle, there is a systematic method for proving symmetry breaking. Needless to say, the positivity of the lowest eigenvalue only indicates the stability of the minimizer in the symmetric class but has no bearing on the symmetry of the true minimizer. In general, if the symmetry of the true minimizer is broken, then it does not belong to an irreducible representation of the symmetry group, although it may be invariant under a subgroup. It is therefore difficult to describe minimizers whose symmetry is broken.

It should be clear from these remarks that there is no general theory for proving symmetry and one has to consider some basic examples. There has been a number of non-quadratic variational problems such as Sobolev-type and Hardy-Littlewood-Sobolev inequalities for which this question was successfully answered~\cite{Aubin-76,Talenti-76,Lieb-83,MR1940370}. The techniques relied mostly on rearrangement inequalities that are closely related to the isoperimetric problem. Another interesting class of examples is furnished by the Caffarelli-Kohn-Nirenberg inequalities which display optimizers with symmetry or with broken symmetry, depending on the values of certain parameters~\cite{Catrina-Wang-01,MR2001882,Felli-Schneider-03}. The method for determining the symmetry range, however, is entirely different from the above mentioned techniques and proceeds through a flow method~\cite{MR3570296,MR3612700}.

Common to all these problems is that these are functionals acting on scalar, possibly positive functions. A class of problems that does not fit this mould involve external magnetic fields. In this case the wave function, out of necessity, is truly complex valued and hence the Euler-Lagrange equations form a system of partial differential equations. There has been a number of results for constant magnetic fields. L.~Erd\"os in~\cite{Erdos96} proved symmetry in a Faber-Krahn type inequality and in~\cite{BONHEURE2018} D.~Bonheure, M.~Nys and J.~Van Schaftingen proved symmetry for some non-linear problems involving small, constant magnetic fields, albeit in a perturbative sense. Some estimates in~\cite{MR3784917} suggest that symmetry can also be expected in non-perturbative regimes as well. Likewise the ground state of a quantum particle confined to a circle with a non-zero magnetic flux is treated in~\cite{doi:10.1063/1.5022121} and optimal results for symmetry and symmetry breaking are given there.

\medskip In this paper we treat a non-linear problem related to a Hardy inequality due to A.~Laptev and T.~Weidl. Let us consider an Aharonov-Bohm vector potential
\be{eq:AB-vect}
\mathbf A(x)=\frac a{|x|^2}\(x_2,-\,x_1\)\,,\quad x=(x_1,x_2)\in\R^2\setminus\{0\}
\ee
corresponding to a singular magnetic field of intensity proportional to $a\in\R$. 
It was shown in~\cite{MR1708811} that
\be{LaWe}
\ir2{|\nA\psi|^2}\ge\min_{k\in\mathbb Z}\,(a-k)^2\ir2{\frac{|\psi|^2}{|x|^2}}\,,\quad\mbox{where}\quad\nA\psi := \nabla\psi+\,i\,\mathbf A\,\psi\,,
\ee
and that there is no function for which there is equality. The interesting point is that there is no Hardy inequality in two dimensions without magnetic field and a non-trivial magnetic field is therefore crucial for the Hardy inequality to hold. 

The main result of this paper is concerned with a generalization of~\eqref{LaWe} to the magnetic interpolation inequality
\be{Ineq:MgnHS}
\ir2{|\nA\psi|^2}+\lambda\ir2{\frac{|\psi|^2}{|x|^2}}\ge\mu(\lambda)\(\ir2{\frac{|\psi|^p}{|x|^2}}\)^{2/p} \,.
\ee
We shall give a range for the parameters $a$, $\lambda$ and $p$ for which the minimizer is symmetric and compute the minimizer explicitly. We emphasize that this range is quantitative and is not based on perturbation theory. Moreover, we give also a quantitative range for the parameters for which the symmetry is broken, see Theorem~\ref{Thm:MgnHS}. In the definition of the vector potential $\mathbf A$, the constant $a$ could be replaced by a function $a(\theta)$, where $\theta$ is the angle in polar coordinates. See Section~\ref{Sec:MagneticField}.

As mentioned before, the main difficulty is that the function $\psi$ is complex valued, \emph{i.e.}, at least when $\psi\neq0$, it can be written in the form $\psi = |\psi|\,e^{iS}$ where the phase $S(x)$ is non-trivial. In such a context, standard techniques such as symmetrization have shown to be successful only in very particular situations, see e.g.~\cite{Erdos96} and~\cite{MR3665549,2018arXiv180506294L}. To explain some of the ideas involved, we use polar coordinates and, when $\psi$ does not vanish, write
\[
\ir2{|\nA\psi|^2} = \ir2{\left[( \partial_r\,|\psi|)^2 + (\partial_r S)^2\,|\psi|^2 + \frac{1}{r^2}\,(\partial_\theta S+A)^2\,|\psi|^2\right]}\,.
\]
By dropping the term involving $\partial_r S$ the inequality effectively reduces to a problem in which the phase $S$ depends only on the polar angle. By optimizing over the phase using ideas from~\cite{doi:10.1063/1.5022121} the problem is then brought into a form that is a particular class of Caffarelli-Kohn-Nirenberg inequalities for which detailed results are known~\cite{MR3570296}. The symmetry breaking part is more complicated. The chief reason for this is that the term involving $\partial_r S$ has to be taken into account. This leads to a rather involved computation yielding a region for symmetry breaking that, however, is surprisingly close to the complement of the region where symmetry holds. Good control of the interplay between the phase of $\psi$ and its modulus is a key point to obtaining good estimates for the parameter where symmetry breaking occurs.

\medskip There are some interesting consequences of this result. The first is a magnetic version of the Keller-Lieb-Thirring inequalities. Let $q\in(1,+\infty)$ and define the weighted norm
\[
\vertiii\phi_q:=\(\ir2{|\phi|^q\,|x|^{2\,(q-1)}}\)^{1/q}\,.
\]
We denote by $\mathrm L^q_\star(\R^2)$ the space of measurable functions such that $\vertiii\phi_q$ is finite. Our first result is an estimate on the ground state energy $\lambda_1$ of the magnetic Schr\"odinger operator $-\LapA-\phi$ on $\R^2$ and a symmetry result of the corresponding optimal potential $\phi$. The magnetic Schr\"odinger energy is
\[
\EA[\psi]:=\ir2{\Big(|\nA\psi|^2-\phi\,|\psi|^2\Big)}\quad\mbox{where}\quad\nA\psi:=\nabla\psi+\,i\,\mathbf A\,\psi
\]
and $\lambda_1(-\LapA-\phi)$ is defined as the infimum of $\EA[\psi]$ on all $\psi\in\HA$ such that $\ir2{|\psi|^2\,|x|^{-2}}=1$, where $\HA$ denotes the homogeneous $\mathrm H^1$ space, \emph{i.e.},
\[
\HA=\left\{u\in\mathrm L^1_{\rm loc}(\R^2\setminus\{0\},\mathbb C)\,:\,|x|^{-1}\,u\in\mathrm L^2(\R^2,\mathbb C),\,\nabla u\in\mathrm L^2(\R^2,\mathbb C^2)\right\}\,.
\]
\begin{theorem}[A magnetic Keller-Lieb-Thirring estimate]\label{Thm:GroundState} Let $a\in\R$, $q\in(1,\infty)$ and $\phi\in\mathrm L^q_\star(\R^2)$. Then there is a convex monotone increasing function $\mu\mapsto\lambda(\mu)$ on~$\R^+$ such that $\,\lim_{\mu\to 0^+} \lambda(\mu)= - \min_{k\in \Z} (a-k)^2 $ and
\be{KLT}
\lambda_1(-\LapA-\phi)\ge-\,\lambda\(\vertiii\phi_q\)\,.
\ee
For $a\in (0, 1/2)$, there is an explicit value $\mu_\star=\mu_\star(a)>0$ such that the potential
\[
\phi(x)=\(|x|^\alpha+|x|^{-\alpha}\)^{-2}\quad\forall\,x\in\R^2\,,\quad\mbox{with}\quad\alpha=\frac{p-2}2\,\sqrt{\lambda(\mu)+a^2}\,,
\]
is optimal for any $\mu\le\mu_\star$. On the contrary, for all $a\in (0, 1/2]$ equality in~\eqref{KLT} is achieved only by non-radial functions if $\mu>\mu_\bullet$ for some explicit $\mu_\bullet>\mu_\star$.
\end{theorem}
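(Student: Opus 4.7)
\emph{Overall strategy and duality to~\eqref{KLT}.} The plan is to deduce Theorem~\ref{Thm:GroundState} by a Keller--Lieb--Thirring duality argument from the magnetic Hardy--Sobolev interpolation inequality~\eqref{Ineq:MgnHS}, coupled with the symmetry and symmetry-breaking information provided by Theorem~\ref{Thm:MgnHS}. First I would match exponents by setting $p=2\,q/(q-1)$, so that $2/p+1/q=1$ and the weights in $\vertiii{\phi}_q$ line up with those in~\eqref{Ineq:MgnHS}. H\"older's inequality then yields
\[
\ir2{\phi\,|\psi|^2}\,\le\,\vertiii{\phi}_q\(\ir2{\frac{|\psi|^p}{|x|^2}}\)^{2/p}\,,
\]
with equality if and only if $\phi\,|x|^2$ is proportional to $|\psi|^{p-2}$. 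Combining with~\eqref{Ineq:MgnHS} one gets $\EA[\psi]\ge-\,\lambda\ir2{|\psi|^2/|x|^2}$ whenever $\mu(\lambda)\ge\vertiii{\phi}_q$. Taking $\lambda(\mu)$ to be the inverse of $\lambda\mapsto\mu(\lambda)$ produces~\eqref{KLT}.

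\emph{Properties of $\lambda(\mu)$.} The best constant $\mu(\lambda)$ in~\eqref{Ineq:MgnHS} is an infimum of affine functions of $\lambda$, hence concave and non-decreasing; its inverse $\lambda(\mu)$ is therefore convex and non-decreasing. The Laptev--Weidl inequality~\eqref{LaWe} forces $\mu(\lambda)=0$ at $\lambda=-\min_{k\in\Z}(a-k)^2$ and strictly positive for larger $\lambda$; testing~\eqref{Ineq:MgnHS} against suitable cut-offs of the angular mode saturating~\eqref{LaWe} shows that $\mu(\lambda)\to 0^+$ as $\lambda$ decreases to this threshold, so $\lambda(\mu)\to-\min_{k\in\Z}(a-k)^2$ as $\mu\to 0^+$.

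\emph{Symmetry and breaking via the dictionary $\phi\leftrightarrow|\psi|^{p-2}$.} Since equality in~\eqref{KLT} forces simultaneous equality in H\"older and~\eqref{Ineq:MgnHS}, the extremal potentials are exactly $\phi=c\,|\psi|^{p-2}/|x|^2$ for $\psi$ an optimizer of~\eqref{Ineq:MgnHS} (with $c$ fixed by $\vertiii{\phi}_q=\mu$). When $a\in(0,1/2)$ and $\mu\le\mu_\star$, Theorem~\ref{Thm:MgnHS} provides a radial optimizer $\psi(x)=f(|x|)$; the radial Euler--Lagrange equation, rewritten in the Emden--Fowler variable $s=\log|x|$, becomes autonomous, and its unique positive finite-energy solution is (up to scaling) a $\cosh$-type profile whose $(p-2)$-th power is $(|x|^\alpha+|x|^{-\alpha})^{-2}$ with $\alpha=\tfrac{p-2}{2}\sqrt{\lambda(\mu)+a^2}$, giving the announced $\phi$. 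For $\mu>\mu_\bullet$ and $a\in(0,1/2]$, the symmetry-breaking half of Theorem~\ref{Thm:MgnHS} rules out radial optimizers of~\eqref{Ineq:MgnHS}, so by the same dictionary every extremizer of~\eqref{KLT} is non-radial.

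\emph{Main obstacle.} The delicate step is the equality-case bookkeeping: one has to verify that an optimizer of~\eqref{KLT} really saturates both H\"older and~\eqref{Ineq:MgnHS}, so that the symmetry conclusions of Theorem~\ref{Thm:MgnHS} transfer faithfully to $\phi$, and one must confirm that the thresholds $\mu_\star<\mu_\bullet$, obtained as images under $\lambda\mapsto\mu(\lambda)$ of the thresholds of Theorem~\ref{Thm:MgnHS}, are strictly positive and strictly ordered. A secondary technical hurdle is the existence of extremals in $\HA$, typically treated by a concentration--compactness or rearrangement argument adapted to the magnetic setting; once this is in place, the identification of the explicit radial $\phi$ and the exclusion of radial extremals above $\mu_\bullet$ follow from Theorem~\ref{Thm:MgnHS} and an elementary ODE computation.
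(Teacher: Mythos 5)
Your proposal is correct and takes essentially the same route as the paper: derive~\eqref{KLT} from~\eqref{Ineq:MgnHS} via H\"older with $q=p/(p-2)$ (equivalently $p=2q/(q-1)$), identify $\lambda(\mu)$ as the inverse of the concave increasing constant $\mu(\lambda)$ from Theorem~\ref{Thm:MgnHS}, and transfer the symmetry/symmetry-breaking conclusions through the equality case $\phi\propto |x|^{-2}|\psi|^{p-2}$, which is exactly how the paper proves the equivalence of the two theorems in Section~\ref{Sec:Keller-Lieb-Thirring}. Your extra remarks on existence of extremals and the equality-case bookkeeping are consistent with what the paper defers to Section~\ref{Sec:Hardy-Sobolev} and to the proof of Theorem~\ref{Thm:MgnHS}.
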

\smallskip
Notice that the definition of $\lambda_1(-\LapA-\phi)$ uses a weighted $\mathrm L^2$ norm. Using the transformations $a\mapsto a-k$, $k\in\Z$, and $a\mapsto-\,a$ that will be discussed in Section~\ref{Sec:MagneticField}, the case $a\in\R\setminus[0,1/2]$ can be reduced to the range $0\le a\le1/2$. For $a=1/2$, we shall see that $\mu_\bullet=\mu_\star=-1/4$. Further details on $\mu_\star$ and $\mu_\bullet$ will be given later. Let $\lambda\mapsto\mu(\lambda)$ be the inverse of $\mu\mapsto\lambda(\mu)$ and define 
\be{Eqn:h}
h(\lambda):=\frac p2\,(2\,\pi)^{1-\frac2p}\(\lambda+a^2\)^{\frac12+\frac1p}\(\frac{2\,\sqrt\pi\,\Gamma\big(\frac p{p-2}\big)}{(p-2)\,\Gamma\big(\frac p{p-2}+\frac12\big)}\)^{1-\frac2p}\,.
\ee
On the interval $(0,\mu_\star]$, the expression of $\lambda(\mu)$ is explicit and it will be established in Section~\ref{Sec:Thm2} that $\lambda(\mu)=h^{-1}(\mu)$ in this case while the computation of the function $h$ can be found in Appendix~\ref{Sec:Appendix}. If $a\in[0,1/2)$, the constant~$\mu_\star$ is given by $\mu_\star=h\(\lambda_\star\)$ where $\lambda_\star$ solves
\be{lambdaStar}
\(\lambda_\star+a^2\)\(p^2-4\)=4\(1-4\,a^2\)\,.
\ee

The constant $\mu_\bullet$ arises from the analysis of the \emph{symmetry breaking} phenomenon obtained by considering the linear instability of the radial optimal function. It is similar to the analysis performed by V.~Felli and M.~Schneider in~\cite{Felli-Schneider-03} for the \emph{Caffarelli-Kohn-Nirenberg inequality} without magnetic fields (see also~\cite{Catrina-Wang-01} for earlier results). The corresponding range with magnetic fields is a range of higher magnetic fields. The \emph{symmetry} range of the parameters corresponds to a weak magnetic field in which the equality case is achieved by radial potentials. The expression of $\mu_\bullet=h^{-1}(\lambda_\bullet)$ with $h$ defined by~\eqref{Eqn:h} is also explicit in terms of $\lambda_\bullet$ given below in Theorem~\ref{Thm:MgnHS}. We refer to Appendix~\ref{Sec:AppendixEv} for details.

\medskip Theorem~\ref{Thm:GroundState} has a dual counterpart which is a statement on the magnetic interpolation inequality~\eqref{Ineq:MgnHS}.
\begin{theorem}[A magnetic Hardy-Sobolev inequality]\label{Thm:MgnHS} Let $a\in[0,1/2]$ and $p>2$. For any $\lambda>-\,a^2$, there is an optimal function $\lambda\mapsto\mu(\lambda)$ which is monotone increasing and concave such that~\eqref{Ineq:MgnHS} holds for any $\psi\in\mathcal H$. With the notation of~\eqref{lambdaStar}, if $a\in (0, 1/2)$ and $\lambda\le\lambda_\star$ equality in~\eqref{Ineq:MgnHS} is achieved by
\[
\psi(x)=\(|x|^\alpha+|x|^{-\alpha}\)^{-\frac2{p-2}}\quad\forall\,x\in\R^2\,,\quad\mbox{with}\quad\alpha=\frac{p-2}2\,\sqrt{\lambda+a^2}\,.
\]
Conversely, if $a\in (0, 1/2]$ and $\lambda>\lambda_\bullet$ with
\be{Eqn:SB}
\textstyle\lambda_\bullet:=\frac{8\(\sqrt{p^4-a^2\,(p-2)^2\,(p+2)\,(3\,p-2)}+2\)-4\,p\,(p+4)}{(p-2)^3\,(p+2)}-\,a^2\,,
\ee
there is symmetry breaking, that is, the optimal functions are not radially symmetric.
\end{theorem}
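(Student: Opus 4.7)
\medskip\noindent\emph{Plan of proof.} Define the optimal constant
\[
\mu(\lambda):=\inf\biggl\{\ir2{|\nA\psi|^2}+\lambda\ir2{\frac{|\psi|^2}{|x|^2}}\,:\,\psi\in\HA,\;\ir2{|\psi|^p\,|x|^{-2}}=1\biggr\}.
\]
As an infimum of functions affine in $\lambda$, $\mu$ is automatically concave, and monotone non-decreasing because the Hardy weight integral is non-negative. For $\lambda>-a^2$ and $a\in[0,1/2]$ the Laptev--Weidl bound~\eqref{LaWe} yields $\ir2{|\nA\psi|^2}\ge a^2\ir2{|\psi|^2/|x|^2}$, so the functional controls $(\lambda+a^2)\ir2{|\psi|^2/|x|^2}$; combined with the scaling invariance $\psi(x)\mapsto\tau^{2/(p-2)}\psi(\tau x)$ of the Rayleigh quotient, this gives $\mu(\lambda)>0$ and existence of an optimizer via a standard concentration--compactness analysis.

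\medskip For the symmetry range $\lambda\le\lambda_\star$, follow the sketch of the introduction. In polar coordinates, writing $\psi=|\psi|\,e^{iS}$ away from zeros, one has
\[
|\nA\psi|^2=(\partial_r|\psi|)^2+\frac{1}{r^2}(\partial_\theta|\psi|)^2+(\partial_r S)^2|\psi|^2+\frac{1}{r^2}(\partial_\theta S+a)^2|\psi|^2.
\]
Drop the non-negative term $(\partial_r S)^2|\psi|^2$ and minimize in $S(r,\theta)$ at fixed modulus. The Euler--Lagrange equation $\partial_\theta\bigl((\partial_\theta S+a)|\psi|^2\bigr)=0$ determines $\partial_\theta S+a$ explicitly in terms of $|\psi|$; substituting back and using Cauchy--Schwarz in $\theta$ leads to an inequality for the real non-negative modulus $u=|\psi|$ of the form
\[
\ir2{|\nabla u|^2}+(\lambda+a^2)\ir2{\frac{u^2}{|x|^2}}\ge\mu\,\biggl(\ir2{\frac{u^p}{|x|^2}}\biggr)^{2/p},
\]
which, after an Emden--Fowler change of variables $r=e^s$, becomes a Caffarelli--Kohn--Nirenberg inequality on the cylinder $\R\times\S^1$. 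Its sharp symmetry range is known from~\cite{MR3570296,MR3612700} via the nonlinear flow method, and matching the parameters with the Felli--Schneider curve produces exactly~\eqref{lambdaStar}. The one-dimensional radial Euler--Lagrange equation then yields the explicit profile $(|x|^\alpha+|x|^{-\alpha})^{-2/(p-2)}$.

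\medskip For the symmetry breaking range $\lambda>\lambda_\bullet$, linearize the functional around the radial candidate $\psi_\star(x)=(|x|^\alpha+|x|^{-\alpha})^{-2/(p-2)}$. Writing $\psi=\psi_\star\,\bigl(1+\varepsilon\,(f+ig)\bigr)$ with real $f,g$, the second variation of the Rayleigh-type ratio associated with~\eqref{Ineq:MgnHS} yields a quadratic form in $(f,g)$. The decisive difference from the symmetry argument is that the term $(\partial_r S)^2|\psi|^2$ must be retained: it introduces an off-diagonal coupling between $f$ and $g$, proportional to $a$ and to $\partial_r\psi_\star$. Expanding $f,g$ in Fourier modes $e^{ik\theta}$ in the angular variable reduces the problem to a family of $2\times2$ Schr\"odinger spectral systems on the half-line, one for each $k\in\Z$. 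The instability threshold is the smallest~$\lambda$ at which one of these systems has a non-positive bottom eigenvalue. Setting the corresponding determinant condition to zero yields a quadratic equation in $\lambda+a^2$ whose discriminant is precisely $p^4-a^2(p-2)^2(p+2)(3p-2)$; solving at the critical angular mode gives the explicit value~\eqref{Eqn:SB}. The main obstacle is exactly this phase--modulus coupling: one must handle the $2\times2$ block structure carefully enough to extract the closed form~\eqref{Eqn:SB}, and then check monotonicity in the Fourier index $k$ to confirm that $k=1$ is the first unstable mode, so that the threshold so obtained is the sharp one coming from this method.
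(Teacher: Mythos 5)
Your Steps 1 and 3 are broadly aligned with the paper's (Step~3 in the paper uses exactly one explicit $k=1$ ansatz rather than a full Fourier analysis, but that is merely economy; exhibiting a single unstable direction is all that is required, and the paper itself remarks in Appendix~\ref{Sec:System} that the sharp linear-instability threshold remains open). The genuine gap is in your symmetry step.

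After dropping $(\partial_r S)^2|\psi|^2$ and optimizing the phase, the phase term does not disappear: it contributes the harmonic-mean expression $a^2\big/\iS{u^{-2}}$ (see~\eqref{Estim:Kinetic}). You then invoke ``Cauchy--Schwarz in $\theta$'' to replace it with $a^2\iS{u^2}$, but Cauchy--Schwarz gives
\[
1=\iS{u\cdot u^{-1}}\le\nrmS u2\,\nrmS{u^{-1}}2\quad\Longrightarrow\quad\frac1{\iS{u^{-2}}}\le\iS{u^2}\,,
\]
which is an upper bound on the harmonic mean and therefore goes in the \emph{wrong direction} for a lower bound on the kinetic energy. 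Your reduced inequality moreover carries the full term $\ir2{|\nabla u|^2}$, i.e.\ coefficient $1$ on $r^{-2}|\partial_\theta u|^2$; after the Emden--Fowler map the corresponding Felli--Schneider criterion is $(\lambda+a^2)(p^2-4)\le4$, which is $\lambda_{\mathrm{FS}}$ of~\eqref{lambdaFS}, not $\lambda_\star$. The paper explicitly points out that this is a strictly worse threshold. The missing ingredient is the Exner--Harrell--Loss inequality
\[
\nrmS{\partial_\theta u}2^2+\frac14\,\nrmS{u^{-1}}2^{-2}\ge\frac14\,\nrmS u2^2
\]
(Lemma~\ref{Lem:Kinetic}, from~\cite{MR1708787}), which \emph{trades} part of the angular kinetic energy to convert the harmonic-mean term into an $L^2$ term, leaving the weakened coefficient $(1-4\,a^2)$ on $r^{-2}|\partial_\theta u|^2$. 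It is precisely this non-isotropic cylinder inequality, handled by Lemma~\ref{Lem:Sym} with $\nu=1-4\,a^2$, that yields the strictly larger symmetry region $\lambda\le\lambda_\star$ rather than $\lambda\le\lambda_{\mathrm{FS}}$. Without it your argument proves a weaker statement than the theorem.
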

\smallskip
The existence of an optimal function in~\eqref{Ineq:MgnHS} follows from a concentration-com\-pact\-ness argument as in~\cite{Catrina-Wang-01} after an Emden-Fowler transformation that will be introduced in Section~\ref{Sec:Hardy-Sobolev}. Section~\ref{Sec:Thm2} is devoted to the proof of Theorem~\ref{Thm:GroundState}. Theorem~\ref{Thm:GroundState} and Theorem~\ref{Thm:MgnHS} are equivalent as we shall see in Section~\ref{Sec:Keller-Lieb-Thirring}. The exponents $p$ and $q$ are such that $p=2\,q/(q-1)$. The value of $\lambda_\bullet$ is computed in Appendix~\ref{Sec:lambda-bullet}. Various other computational issues are dealt with in Appendix~\ref{Sec:Appendix}, as well as two figures which summarize the ranges of symmetry and symmetry breaking.

\section{Preliminary results}\label{Sec:Prelim}

\subsection{Considerations on the Aharonov-Bohm magnetic field}\label{Sec:MagneticField}

The magnetic field $B=\partial_{x_1}\mathbf A_2-\partial_{x_2}\mathbf A_1$, where $\mathbf A(x)$ is given by \eqref{eq:AB-vect}, is equal to $2\,\pi\,a\,\delta$ in the sense of distributions, where~$\delta$ denotes Dirac's distribution at $x=0$. Let us consider polar coordinates $(r,\theta)\in[0,+\infty)\times\S^1$, so that $r=|x|$, $\partial_r\psi=\nabla\psi\cdot x/r$, $\partial_\theta\psi=(-\,x_2,x_1)\cdot\nabla\psi$ and therefore $|\nabla\psi|^2=|\partial_r\psi|^2+r^{-2}\,|\partial_\theta\psi|^2$. On $\S^1\approx[0,2\,\pi)$, we consider the uniform probability measure
\[
d\sigma=(2\pi)^{-1}\,d\theta\,.
\]
We observe that for more general Aharonov-Bohm magnetic fields, $a$ could depend on $\theta$. However, by the change of gauge
\[
\psi(r,\theta)\mapsto e^{i\int_0^\theta\(a-\bar a\)\,d\theta}\,\psi(r,\theta)=:\chi(r,\theta)
\]
where $\bar a:=\iS a$ is the \emph{magnetic flux}, we notice that
\[
|\nA\psi|^2=|\partial_r\psi|^2+\frac1{r^2}\,|\(\partial_\theta-\,i\,a\)\psi|^2=|\partial_r\chi|^2+\frac1{r^2}\,|\(\partial_\theta-\,i\,\bar a\)\chi|^2\,.
\]
In this paper we shall therefore always assume that \emph{$a$ is a constant function} without loss of generality.

For any $k\in\Z$, if $\psi(r,\theta)=e^{i\,k\,\theta}\,\chi(r,\theta)$, then
\[
|\(\partial_\theta-\,i\,a\)\psi|^2=|\partial_\theta\chi+i\,(k-a)\,\chi|^2\,.
\]
Similarly, if $\chi(r,\theta)=e^{-\,i\,\theta}\,\overline\psi(r,\theta)$, we find that
\[
|\(\partial_\theta-i\,a\)\bar\psi|^2=|\partial_\theta\chi+i\,(1-a)\,\chi|^2\,,
\]
It is therefore enough to consider the case $a\in[0,1/2]$. The general case is obtained by replacing $a^2$ with $\min_{k\in\mathbb Z}\,(a-k)^2$ in all estimates.

\subsection{Magnetic kinetic energy: some estimates}\label{Sec:Kinetic}
Note that when we consider functions depending only on $\theta\in\S^1$, all integrals on $\S^1$ are computed with respect to the probability measure $d\sigma=(2\,\pi)^{-1}\,d\theta$. Our first result is inspired by~\cite[Lemma~III.2]{doi:10.1063/1.5022121}.
\begin{lemma}\label{Lem:Relaxation} Assume that $a\in[0,1/2]$ and $\psi\in C^1\cap\HA$ is such that $|\psi|>0$. Then we have
\be{Estim:Kinetic}
\ir2{|\nA\psi|^2}\ge\ir2{\(|\partial_ru|^2+\frac1{r^2}\,|\partial_\theta u|^2+\frac1{r^2}\,\frac{a^2}{\iS{u^{-2}}}\)}
\ee
where $\psi=u\,e^{iS}$. Equality holds if and only if $\partial_rS\equiv0$ and
\[
\partial_\theta S=a-\frac a{u^2}\,\frac1{\iS{u^{-2}}}\,.
\]
In the special case when $u$ does not depend on $\theta$, equality in~\eqref{Estim:Kinetic} is achieved if and only if $S$ is constant.\end{lemma}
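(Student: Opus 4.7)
The plan is to write $\psi = u\,e^{iS}$ with $u := |\psi| > 0$, expand in polar coordinates, and identify the right nonnegative expression to drop. Using $|\nA\psi|^2 = |\partial_r\psi|^2 + r^{-2}\,|(\partial_\theta - i\,a)\psi|^2$ from Section~\ref{Sec:MagneticField}, a direct pointwise computation yields
\[
|\nA\psi|^2 = (\partial_r u)^2 + u^2\,(\partial_r S)^2 + \frac{1}{r^2}\Big[(\partial_\theta u)^2 + u^2\,(\partial_\theta S - a)^2\Big]\,.
\]
Dropping the nonnegative term $u^2(\partial_r S)^2$ (with equality forcing $\partial_r S\equiv 0$ since $u>0$) reduces~\eqref{Estim:Kinetic} to the sharp angular bound $\iS{u^2(\partial_\theta S - a)^2}\ge a^2/\iS{u^{-2}}$ for each fixed $r>0$, which after multiplication by $r^{-1}$ and integration in $r\,dr$ recombines with the first two terms to give the full statement.

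The crux is the sharp angular estimate. Although $S$ is only defined modulo $2\,\pi\,\Z$ on $\R^2\setminus\{0\}$, single-valuedness of $\psi = u\,e^{iS}$ imposes the topological constraint $\int_0^{2\,\pi}\partial_\theta S\,d\theta \in 2\,\pi\,\Z$, equivalently $\iS{\partial_\theta S} = k$ for some $k\in\Z$. Cauchy-Schwarz applied to $u\,(\partial_\theta S - a)$ and $1/u$ then yields
\[
(k-a)^2 = \Big(\iS{(\partial_\theta S - a)}\Big)^2 \le \iS{u^2(\partial_\theta S - a)^2}\,\iS{u^{-2}}\,,
\]
and since $a\in[0,1/2]$ the minimum over $k\in\Z$ of $(k-a)^2$ is attained at $k=0$, producing the desired $a^2$ in the numerator.

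For equality in~\eqref{Estim:Kinetic}, both steps must be sharp: the drop of $u^2(\partial_r S)^2$ forces $\partial_r S\equiv 0$, while Cauchy-Schwarz equality forces $u\,(\partial_\theta S - a) = c(r)/u$ for some function $c$ depending only on $r$. Combined with $\iS{(\partial_\theta S - a)} = -\,a$ (the optimal choice $k=0$), this pins down $c = -\,a/\iS{u^{-2}}$ and yields the announced formula $\partial_\theta S = a - a/(u^2\,\iS{u^{-2}})$. When $u$ is $\theta$-independent, $\iS{u^{-2}} = u^{-2}$, so the formula collapses to $\partial_\theta S \equiv 0$ and $S$ is constant.

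The main subtlety is topological rather than analytic: one must resist pointwise minimization of $(\partial_\theta S - a)^2$, which would trivially return $0$, and instead track the integer winding number $k$ of $\psi/|\psi|$ around the origin. The hypothesis $a\in[0,1/2]$ conveniently selects $k=0$; for general $a\in\R$ the same argument would replace $a^2$ by $\min_{k\in\Z}(a-k)^2$, consistently with~\eqref{LaWe} and the gauge reductions of Section~\ref{Sec:MagneticField}.
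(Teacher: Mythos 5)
Your proof is correct, and the overall structure matches the paper's: polar decomposition of the magnetic Dirichlet form, dropping the nonnegative radial-phase term, reducing to a one-dimensional angular estimate on each circle, and invoking the winding-number constraint $\iS{\partial_\theta S}=k\in\Z$ coming from single-valuedness of $\psi$. Where you diverge from the paper is in how you prove the key bound $\iS{u^2(\partial_\theta S-a)^2}\ge(k-a)^2/\iS{u^{-2}}$. The paper minimizes over the phase by writing down the Euler--Lagrange equation $\partial_\theta\big((\partial_\theta S-a)\,u^2\big)=0$, solving it as $(\partial_\theta S-a)\,u^2=c(r)$, determining $c$ from the periodicity constraint, and then evaluating the resulting functional. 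You instead bypass the variational step entirely and apply Cauchy--Schwarz to the factorization $u(\partial_\theta S-a)\cdot u^{-1}$, which gives the same lower bound directly and turns the equality case into the equality case of Cauchy--Schwarz, i.e., $u(\partial_\theta S-a)=c(r)\,u^{-1}$. The two routes are substantively equivalent (the Cauchy--Schwarz equality characterization is precisely the solution of the Euler--Lagrange equation), but yours is more elementary: it avoids having to argue, even implicitly, that the critical point of the angular energy is a minimizer, and it makes the sharpness analysis mechanical rather than variational. One cosmetic remark: when $a=1/2$ the minimum of $(k-a)^2$ is attained at both $k=0$ and $k=1$, so the stated equality condition $\partial_\theta S=a-\,a/(u^2\iS{u^{-2}})$ covers only the branch $k=0$; the paper's proof has the same slight imprecision at this endpoint, and the case is immaterial for the rest of the argument.
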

\begin{proof} Let $S$ be such that $\psi=u\,e^{i\,S}$. We compute
\begin{eqnarray*}
|\nA\psi|^2&=&|\partial_r\psi|^2+\frac1{r^2}\,|\(\partial_\theta-\,i\,a\)\psi|^2\\
&=&|\partial_ru|^2+\frac1{r^2}\,|\partial_\theta u|^2+|u|^2\(|\partial_rS|^2+\frac1{r^2}\,|\partial_\theta S-a|^2\)\,.
\end{eqnarray*}
After dropping the term $|\partial_rS|^2$, we can optimize
\[
\ir2{\(|\partial_ru|^2+\frac1{r^2}\,|\partial_\theta u|^2+\frac1{r^2}\,|u|^2\,|\partial_\theta S-a|^2\)}
\]
over the phase~$S$ using the corresponding Euler-Lagrange equation
\[
\partial_\theta\((\partial_\theta S-a)\,u^2\)=0\,.
\]
This means that $\partial_\theta S=a+c/u^2$ for some $c=c(r)$. We integrate this identity over $\S^1$ and take into account the periodicity of $S$: for some $k\in\Z$, we have
\[
c(r)=\frac{k-a}{\iS{u^{-2}}}=\frac{k-a}{\nrmS{u^{-1}}2^2}\,.
\]
In order to minimize the magnetic kinetic energy, we have to choose the best possible $k\in\Z$ and obtain
\[
\iS{|\partial_\theta S-a|^2\,u^2}=\iS{\frac{c^2}{u^2}}=\min_{k\in\Z}\frac{(k-a)^2}{\nrmS{u^{-1}}2^2}=\frac{a^2}{\iS{u^{-2}}}\,.
\]
As a consequence, the expression of $c$ is given by $k=0$ in the equality case in~\eqref{Estim:Kinetic} and $\partial_\theta S\equiv0$ if $u$ does not depend on $\theta$. On the other hand, equality in~\eqref{Estim:Kinetic} is achieved if and only if $\partial_rS\equiv0$.\hfill\ \qed\end{proof}
\begin{lemma}\label{Lem:Kinetic} For all $a\in[0,1/2]$ and $\psi\in\mathrm H^1(\S^1)$ with $u=|\psi|$, we have
\[
\iS{|\partial_\theta\psi-\,i\,a\,\psi|^2}\ge\(1-4\,a^2\)\iS{|\partial_\theta u|^2}+a^2\iS{u^2}\,.
\]
\end{lemma}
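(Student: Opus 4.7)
The plan is to polar-decompose $\psi$ and isolate the magnetic contribution. Assume first that $|\psi|>0$ everywhere and write $\psi=u\,e^{iS}$ with $u=|\psi|$; the general case then follows by approximating $\psi$ by functions whose modulus is bounded below by $\varepsilon>0$ and passing to the limit, all the integrals involved being continuous under such a regularization. A direct computation gives
\[
|\partial_\theta\psi-i\,a\,\psi|^2=(\partial_\theta u)^2+u^2\,(\partial_\theta S-a)^2,
\]
so, after integrating and rearranging, the target inequality reduces to
\[
4\,a^2\iS{(\partial_\theta u)^2}+\iS{u^2\,(\partial_\theta S-a)^2}\ge a^2\,\iS{u^2}.
\]

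For the phase-dependent term I would use the argument of Lemma~\ref{Lem:Relaxation}. Since $S$ is multivalued on $\S^1$ with winding number $k=\iS{\partial_\theta S}\in\Z$, Cauchy--Schwarz with weight $u^{-2}$ gives
\[
\iS{u^2\,(\partial_\theta S-a)^2}\,\iS{u^{-2}}\ge\bigl(\iS{|\partial_\theta S-a|}\bigr)^2\ge(k-a)^2\ge a^2,
\]
with the last bound saturated at $k=0$ because $a\in[0,1/2]$. Substituting, the lemma is reduced to the purely real scalar inequality
\[
4\,\iS{(\partial_\theta u)^2}\,\iS{u^{-2}}+1\ge\iS{u^2}\,\iS{u^{-2}}
\]
for $u>0$ in $H^1(\S^1)$.

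To attack this scalar inequality, the idea is to rewrite both sides through the Lagrange identity,
\[
\iS{u^2}\iS{u^{-2}}-1=\frac12\iint_{\S^1\times\S^1}\bigl(u(\theta)/u(\theta')-u(\theta')/u(\theta)\bigr)^2 d\sigma(\theta)\,d\sigma(\theta'),
\]
and likewise express $4\iS{(u')^2}\iS{u^{-2}}$ as a symmetric double integral, using that $\iS{u'/u}=0$ by periodicity of $\log u$. Setting $V=u^2$, both sides become integrals of $(V(\theta)-V(\theta'))^2/(V(\theta)V(\theta'))$ and $(V'(\theta)-V'(\theta'))^2/(V(\theta)V(\theta'))$; the desired comparison then follows from a Cauchy--Schwarz estimate applied to $V(\theta)-V(\theta')=\int V'$ along the shorter arc joining $\theta$ to $\theta'$. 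The sharp constant $4$ reflects equality on the two-parameter family $u^2=A+B\cos(\theta-\theta_0)$ with $A>|B|$, which one verifies to be precisely the set of critical points of the associated Euler--Lagrange equation.

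The main obstacle is this scalar inequality: it is sharp, its equality set is a nontrivial two-parameter family rather than just the constants, and naive combinations of the Poincar\'e inequality on $\S^1$ with Cauchy--Schwarz are strictly too weak. For instance, splitting via $\iS{u^2}\le(\iS u)^2+\iS{(u')^2}$ and $(\iS u)^2\ge 1/\iS{u^{-2}}$ would require $(\iS u)^2-1/\iS{u^{-2}}\le 3\iS{(u')^2}$, which already fails on the equality family above. Obtaining the sharp constant $4$ forces one to treat $\iS{u^2}$ and the harmonic-mean quantity $1/\iS{u^{-2}}$ simultaneously rather than estimating them separately, and this is the delicate point of the argument.
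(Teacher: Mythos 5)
Your reduction reaches the same key scalar inequality the paper uses, namely the Exner--Harrell--Loss bound
\[
\nrmS{\partial_\theta u}2^2+\tfrac14\,\nrmS{u^{-1}}2^{-2}\ge\tfrac14\,\nrmS u2^2\,,
\]
and you arrive at it via the same mechanism (split $\psi=u\,e^{iS}$, bound the phase term with the winding-number argument as in Lemma~\ref{Lem:Relaxation}, then absorb the result into the modulus terms). So the skeleton of your argument coincides with the paper's. The difference is that the paper \emph{cites}~\cite{MR1708787} for this inequality, while you try to re-derive it, and it is precisely that part of your proposal which does not hold up.

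Concretely, after setting $V=u^2$ and invoking Lagrange's identity twice (and using $\iS{V'/V}=0$), the inequality to prove becomes
\[
\iint_{\S^1\times\S^1}\frac{(V'(\theta)-V'(\theta'))^2}{V(\theta)\,V(\theta')}\,d\sigma\,d\sigma'
\;\ge\;
\iint_{\S^1\times\S^1}\frac{(V(\theta)-V(\theta'))^2}{V(\theta)\,V(\theta')}\,d\sigma\,d\sigma'\,.
\]
This is genuinely subtle: it fails pointwise (take $\theta,\theta'$ near two distinct critical points of $V$, so both $V'$'s are small while $V(\theta)\ne V(\theta')$), and the Cauchy--Schwarz estimate you propose on the shorter arc produces a bound of the form $(V(\theta)-V(\theta'))^2\le L(\theta,\theta')\int_{\text{arc}}(V')^2\,dt$, which does not couple to the difference $(V'(\theta)-V'(\theta'))^2$ appearing on the left. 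Your observation that equality holds exactly for $V=A+B\cos(\theta-\theta_0)$ is correct and does show the inequality is sharp, but no actual proof of the scalar inequality is given. Since this inequality is itself a nontrivial result with its own proof in~\cite{MR1708787}, the honest move --- and what the paper does --- is to quote it.

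A second gap is the treatment of the case where $\psi$ vanishes. The ``approximate by functions with $|\psi|\ge\varepsilon$ and pass to the limit'' plan is not straightforward: your intermediate estimate produces the term $1/\iS{u^{-2}}$, which tends to $0$ as $u$ is allowed to vanish, and more importantly the term $-\,(1-4\,a^2)\iS{|\partial_\theta u_\varepsilon|^2}$ in the target inequality has a \emph{negative} coefficient, so passing to the limit requires $\iS{|\partial_\theta u_\varepsilon|^2}\to\iS{|\partial_\theta u|^2}$ (strong, not merely weak, $\mathrm L^2$ convergence of $\partial_\theta|\psi_\varepsilon|$), which does not follow from $\psi_\varepsilon\to\psi$ in $\mathrm H^1$ without further argument. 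The paper's handling of this case is both shorter and cleaner: if $\psi$ vanishes at some $\theta_0$, apply the diamagnetic inequality to get $\iS{|(\partial_\theta-i\,a)\psi|^2}\ge\iS{|\partial_\theta u|^2}$ and then Poincar\'e's inequality $\iS{|\partial_\theta u|^2}\ge\tfrac14\iS{u^2}$ for $u$ vanishing at a point of $\S^1$, which directly gives the claim without any polar decomposition or limiting argument. I would recommend adopting that argument for the vanishing case.
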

\begin{proof}
First assume that $u=|\psi|$ is strictly positive in $\S^1$. With $\psi=u\,e^{i\,S}$, we can write
\[
\iS{|\partial_\theta\psi-\,i\,a\,\psi|^2}=\iS{\(|\partial_\theta u|^2+|\partial_\theta S-\,a|^2\,u^2\)}\,.
\]
We use the same arguments as in the proof of Lemma~\ref{Lem:Relaxation} and the inequality
\[
\nrmS{\partial_\theta u}2^2+\frac14\,\nrmS{u^{-1}}2^{-2}\ge\frac14\,\nrmS u2^2
\]
proved by P.~Exner, E.~Harrell and M.~Loss in~\cite[Section~IV]{MR1708787} to write
\begin{eqnarray*}
&&\hspace*{-24pt}\iS{|\partial_\theta\psi-\,i\,a\,\psi|^2}\\
&\ge&\(1-4\,a^2\)\iS{|\partial_\theta u|^2}+4\,a^2\(\nrmS{\partial_\theta u}2^2+\frac14\,\nrmS{u^{-1}}2^{-2}\)\\
&\ge&\(1-4\,a^2\)\iS{|\partial_\theta u|^2}+a^2\iS{u ^2}\,.
\end{eqnarray*}

Next let us consider the case when $\psi$ is equal to $0$ at some point of $\S^1$. Without loss of generality we can assume that $\psi(0)=\psi(2\,\pi)=0$ and use the diamagne\-tic inequality and Poincar\'e's inequality applied to $u=|\psi|$ in order to obtain
\begin{eqnarray*}
\iS{|(\partial_\theta-\,i\,a)\,\psi|^2}\ge\iS{|\partial_\theta u|^2}&=&\(1-4\,a^2\)\kern-3pt\iS{|\partial_\theta u|^2}+4\,a^2\kern-3pt\iS{|\partial_\theta u|^2}\\
&\ge&\(1-4\,a^2\)\kern-3pt\iS{|\partial_\theta u|^2}+a^2\kern-3pt\iS{u^2}\,.
\end{eqnarray*}
\hfill\ \qed\end{proof}

\subsection{Magnetic Hardy and non-magnetic Hardy-Sobolev inequalities}\label{Sec:Hardy-Sobolev}~

In dimension $d=2$, the magnetic Hardy inequality~\eqref{LaWe} holds, as was proved in~\cite{MR1708811}. When $a\in[0,1/2]$, we notice that \hbox{$\displaystyle\min_{k\in\mathbb Z}\,(a-k)^2=a^2$}.

The weighted interpolation inequality
\be{CKN}
\ir2{\frac{|\nabla v|^2}{|x|^{2\CKNa}}}\ge\mathsf C_\CKNa\(\ir2{\frac{|v|^p}{|x|^{\CKNb\,p}}}\)^{2/p}\quad\forall\,v\in\mathcal D(\R^2)
\ee
is known in the literature as the \emph{Caffarelli-Kohn-Nirenberg inequality} according to~\cite{Caffarelli-Kohn-Nirenberg-84} but was apparently discovered earlier by V.P.~Il'in, see~\cite{Ilyin}. The exponent $\CKNb=\CKNa+2/p$ is determined by the scaling invariance, the inequality can be extended by density to a space larger than the space $\mathcal D(\R^2)$ of smooth functions with compact support, and as $p$ varies in $(2,\infty)$, the parameters $\CKNa$ and $\CKNb$ are such that
\[
\CKNa<\CKNb\le\CKNa+1\quad\mbox{and}\quad\CKNa<0\,.
\]
Moreover, it is also possible to consider the case $\CKNa>0$ in an appropriate functional space after a Kelvin-type transformation: see~\cite{Catrina-Wang-01,DELT09}, but we will not consider this case here. As noticed for instance in~\cite{DELT09}, by considering $v(x)=|x|^\CKNa\,u(x)$, Ineq.~\eqref{CKN} is equivalent to the \emph{Hardy-Sobolev inequality}
\be{CKN2}
\ir2{|\nabla u|^2}+\CKNa^2\ir2{\frac{|u|^2}{|x|^2}}\ge\mathsf C_\CKNa\(\ir2{\frac{|u|^p}{|x|^2}}\)^{2/p}\quad\forall\,u\in\mathcal D(\R^2)\,.
\ee
By linear instability, see~\cite{Felli-Schneider-03}, the optimal functions for~\eqref{CKN} are not radially symmetric if $\CKNb<\CKNb_{\rm FS}(\CKNa):=\CKNa-\CKNa/\sqrt{1+\CKNa^2}$. The main ingredient of the proof is reproduced in Appendix~\ref{Sec:AppendixEv}, in the two-dimensional case. On the contrary, if $\CKNb_{\rm FS}(\CKNa)\le\CKNb\le\CKNa+1$, we learn from~\cite{MR3570296} that equality in~\eqref{CKN} is achieved by
\begin{equation}\label{vstar}
v_\star(x)=\(1+|x|^{-(p-2)\,\CKNa}\)^{-\frac2{p-2}}\quad\forall\,x\in\R^d
\end{equation}
up to a scaling and a multiplication by a constant. In the range $\CKNb_{\rm FS}(\CKNa)\le\CKNb\le\CKNa+1$, this provides us with the value of the optimal constant, namely $\mathsf C_\CKNa=\mathsf C_\CKNa^\star$ where the expression of $\mathsf C_\CKNa^\star$ is given in Appendix \ref{Sec:AppendixOptCst}. We observe that for any given $\CKNb\in(0,1)$, we have that $\lim_{\CKNa\to0_-}\mathsf C_\CKNa^\star=+\infty$, so that the inequality does not make sense for $\CKNa=0$. Using polar coordinates~$(r,\theta)$, the Emden-Fowler transformation
\be{EF}
u(r,\theta)=w(s,\theta)\,,\quad s=-\log r
\ee
turns Ineq.~\eqref{CKN2} into
\be{CKN3}
\iC{\(w_s^2+w_\theta^2+\CKNa^2\,w^2\)}\ge\mathsf K_\CKNa\(\iC{|w|^p}\)^{2/p}\quad\kern-5pt\forall\,w\in\mathrm H^1(\R\times\S^1)
\ee
with $\mathsf K_\CKNa:=(2\,\pi)^{\frac2p-1}\,\mathsf C_\CKNa$. We refer to~\cite{1703} for a more detailed review on the Caffarelli-Kohn-Nirenberg inequality. For any given $p>2$, we define
\be{CK}
\mathsf K_\CKNa^\star:=(2\,\pi)^{\frac2p-1}\,\mathsf C_\CKNa^\star\quad\mbox{and}\quad\mathsf k^\star(\lambda)=\mathsf K_{\sqrt{a^2+\lambda}}^\star
\ee
so that $\mathsf K_\CKNa^\star$ is the optimal constant in~\eqref{CKN3} restricted to \emph{symmetric functions}, that is, functions depending only on $s$. See Appendix~\ref{Sec:AppendixOptCst} for the explicit expression of~$\mathsf K_\CKNa^\star$. For our purpose, we have to consider a slightly more general problem. For any $w\in\mathrm H^1(\R\times\S^1)$, let us define
\be{Fkappanu}
\mathcal F_{\kappa,\nu}[w]:=\iC{\(w_s^2+\nu\,w_\theta^2+\kappa\,w^2\)}-\mathsf K_{\sqrt\kappa}^\star\(\iC{|w|^p}\)^{2/p}\,.
\ee
\begin{lemma}\label{Lem:Sym} Let $p>2$, $\kappa>0$ and $\nu>0$. Then $\mathcal F_{\kappa,\nu}$ has a minimizer $w\in\mathrm H^1(\R\times\S^1)$ such that $\|w\|_{\mathrm L^p(\R\times\S^1)}=1$ and $w$ depends only on $s$ if and only if
\[
\kappa\,(p^2-4)\le4\,\nu\,.
\]\end{lemma}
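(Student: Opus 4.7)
The plan is to reduce the problem to the symmetry question for the standard Caffarelli-Kohn-Nirenberg inequality on $\R\times\S^1$ by a one-parameter dilation in the $s$ variable, and then to invoke the results of~\cite{MR3570296,Felli-Schneider-03}. First I would set $\tilde s=\sqrt\nu\,s$ and $\tilde w(\tilde s,\theta):=w(s,\theta)$, and track how the three pieces of $\mathcal F_{\kappa,\nu}[w]$ transform under this change of variable. A direct computation yields
\[
\iC{\(w_s^2+\nu\,w_\theta^2+\kappa\,w^2\)}=\sqrt\nu\iC{\(\tilde w_{\tilde s}^2+\tilde w_\theta^2+\tfrac{\kappa}{\nu}\,\tilde w^2\)}
\]
together with $\big(\iC{|w|^p}\big)^{2/p}=\nu^{-1/p}\big(\iC{|\tilde w|^p}\big)^{2/p}$. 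Applying the one-dimensional scaling $w(s)\mapsto u(\CKNa\,s)$ to the definition of $\mathsf K_{\CKNa}^\star$ -- the optimal constant in~\eqref{CKN3} restricted to $s$-dependent functions -- gives the homogeneity $\mathsf K_{\CKNa}^\star=\CKNa^{1+2/p}\,\mathsf K_1^\star$, so that $\mathsf K_{\sqrt\kappa}^\star\,\nu^{-1/p}=\sqrt\nu\,\mathsf K_{\sqrt{\kappa/\nu}}^\star$. Combining these identities produces the scaling identity $\mathcal F_{\kappa,\nu}[w]=\sqrt\nu\,\mathcal F_{\kappa/\nu,1}[\tilde w]$. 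Since $w\mapsto\tilde w$ is a bijection on $\mathrm H^1(\R\times\S^1)$ that preserves $\theta$-independence and only rescales the $\mathrm L^p$ normalization, the constrained problem for $\mathcal F_{\kappa,\nu}$ on $\{\|w\|_{\mathrm L^p}=1\}$ admits a $\theta$-independent minimizer if and only if the corresponding problem for $\mathcal F_{\kappa/\nu,1}$ does.

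This reduces the lemma to the case $\nu=1$. In that case $\mathcal F_{\tilde\kappa,1}$ with $\tilde\kappa:=\kappa/\nu$ is exactly the deficit functional attached to the Emden-Fowler form~\eqref{CKN3} of the two-dimensional Caffarelli-Kohn-Nirenberg inequality with $\CKNa^2=\tilde\kappa$ and $\CKNb=|\CKNa|+2/p$. The symmetry theorem of~\cite{MR3570296} and the linear-instability analysis of Felli-Schneider~\cite{Felli-Schneider-03} (reproduced in Appendix~\ref{Sec:AppendixEv}) together say that a $\theta$-independent minimizer exists in $\{\|w\|_{\mathrm L^p}=1\}$ if and only if $\tilde\kappa\,(p^2-4)\le 4$; the threshold $4$ arises from the first non-zero eigenvalue of $-\partial_\theta^2$ on $\S^1$ entering the second variation at the symmetric critical point through the Felli-Schneider eigenvalue problem. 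Substituting back $\tilde\kappa=\kappa/\nu$ gives exactly the stated condition $\kappa\,(p^2-4)\le 4\,\nu$.

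The main obstacle is a bookkeeping one: the $\nu$-dependence has to cancel cleanly between the Dirichlet term in $\tilde s$, the mass term, and the $\mathrm L^p$ normalization, and this cancellation is forced by the precise homogeneity $\mathsf K_{\CKNa}^\star\propto\CKNa^{1+2/p}$ of the one-dimensional optimal constant. Once the scaling identity is in place the symmetry/symmetry-breaking dichotomy is a direct transcription of the cited two-dimensional Caffarelli-Kohn-Nirenberg results, so no genuinely new analytic ingredient is needed beyond the dilation; in particular, the linearization in the first spherical harmonic $e^{\pm i\theta}$ is what transforms the eigenvalue $1$ of $-\partial_\theta^2$ into the constant $4$ on the right-hand side of the threshold.
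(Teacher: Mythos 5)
Your scaling reduction is correct and is essentially the cleanest way to make rigorous what the paper compresses into the phrase ``by adapting \cite[Corollary~1.3]{MR3570296}.'' The identities
\[
\iC{\(w_s^2+\nu\,w_\theta^2+\kappa\,w^2\)}=\sqrt\nu\iC{\(\tilde w_{\tilde s}^2+\tilde w_\theta^2+\tfrac\kappa\nu\,\tilde w^2\)},
\quad
\mathsf K_{\sqrt\kappa}^\star\,\nu^{-1/p}=\sqrt\nu\,\mathsf K_{\sqrt{\kappa/\nu}}^\star
\]
under $\tilde s=\sqrt\nu\,s$ do give $\mathcal F_{\kappa,\nu}[w]=\sqrt\nu\,\mathcal F_{\kappa/\nu,1}[\tilde w]$, the change of variables preserves $\theta$-independence, and since $\mathcal F$ is $2$-homogeneous the normalization mismatch is harmless; combined with the $\nu=1$ dichotomy from \cite{MR3570296,Felli-Schneider-03}, which gives the threshold $\tilde\kappa\,(p^2-4)\le4$, this is a valid proof. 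The paper proceeds a little differently: it does not introduce the dilation, but instead identifies the symmetric critical point $w_\star$, computes the linear-instability threshold $\kappa\,(p^2-4)>4\,\nu$ directly for the operator $\mathcal H_{\kappa,\nu}=-\partial_s^2-\nu\,\partial_\theta^2+\kappa-(p-1)\,w_\star^{p-2}$ in Appendix~\ref{Sec:AppendixEv}, and asserts that the rigidity part of \cite{MR3570296} carries over verbatim with the extra coefficient $\nu$. Your route has the merit of reducing the rigidity direction to a literal application of the cited theorem rather than to an ``adaptation,'' at the modest cost of one more bookkeeping identity.

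Two small corrections to your narrative. First, the exponent in~\eqref{CKN} is $\CKNb=\CKNa+2/p$ with $\CKNa<0$, not $\CKNb=|\CKNa|+2/p$; the sign only matters in the discussion of~\eqref{CKN}, since only $\CKNa^2$ survives in~\eqref{CKN3}. Second, your closing claim that ``the linearization in the first spherical harmonic transforms the eigenvalue $1$ of $-\partial_\theta^2$ into the constant $4$'' is imprecise: the eigenvalue $1$ of $-\partial_\theta^2$ contributes the $\nu$ (here $\nu=1$) on the right of the threshold, while the factor $4$ in $4\nu$ comes from the one-dimensional P\"oschl--Teller ground state $\lambda_1(\mathcal H_\kappa)=-\tfrac\kappa4(p^2-4)$. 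The threshold is more transparently written as $\tfrac\kappa4(p^2-4)\le\nu\cdot 1$, with the two factors on the two sides coming from those two separate sources. This does not affect your argument, only the explanation of where the numbers come from.
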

\begin{proof} The existence of a minimizer is obtained as in the standard case corresponding to $\nu=1$ and we refer to~\cite{MR2966111} for the details. The function $w_\star(s,\theta)=v_\star\(e^{-s},\theta\)$, where $v_\star$ is defined by \eqref{vstar}, is a critical point of $\mathcal F_{\kappa,\nu}$ such that $\mathcal F_{\kappa,\nu}[w_\star]=0$ and $w_\star$ is linearly instable if and only if $\kappa\,(p^2-4)>4\,\nu$ (see Appendix~\ref{Sec:AppendixEv}). By adapting~\cite[Corollary~1.3]{MR3570296}, the minimizer of $\mathcal F_{\kappa,\nu}$ is independent of the angular variable $\theta$ if and only if $\kappa\,(p^2-4)\le4\,\nu$. In that case, we have $w(s)=w_\star(s-s_0)$ for some $s_0\in\R$.\hfill\ \qed\end{proof}

\section{Proofs}\label{Sec:Proofs}

\subsection{Magnetic interpolation inequalities}\label{Sec:Thm2}

We prove Theorem~\ref{Thm:MgnHS}. For more readability, we split the proof into three steps.

\medskip\noindent\emph{Step 1 -- Ineq.~\eqref{Ineq:MgnHS} without the optimal constant}. Let $t\in[0,1]$. From the diamagnetic inequality, we get
\[
\nrmdeux{\nA\psi}2\ge\nrmdeux{\nabla u}2
\]
where $u=|\psi|$, and therefore,
\begin{eqnarray*}
\ir2{|\nA\psi|^2}+\lambda\ir2{\frac{|\psi|^2}{|x|^2}}&\ge&t\(\nrmdeux{\nA\psi}2^2-\,a^2\ir2{\frac{u^2}{|x|^2}}\)\\
&&+\,(1-t)\(\nrmdeux{\nabla u}2^2+\frac{\lambda+a^2\,t}{1-t}\kern-2pt\ir2{\frac{u^2}{|x|^2}}\).
\end{eqnarray*}
Using~\eqref{LaWe} and~\eqref{CKN2} applied with $\CKNa^2=\frac{\lambda+a^2\,t}{1-t}$, $t\in(0,1)$ such that $\lambda+a^2\,t>0$, this estimate proves the existence of a positive constant $\mu(\lambda)$ in~\eqref{Ineq:MgnHS}. As an infimum on $\mathcal H$ of affine non-decreasing functions of $\lambda$, the function $\lambda\mapsto\mu(\lambda)$ is concave and non-decreasing.

\medskip\noindent\emph{Step 2 -- Optimal estimate in the symmetry range}. With $a\in[0,1/2]$, $\psi\in\mathcal H(\R^2)$ and $u=|\psi|$, we know from Lemma~\ref{Lem:Kinetic} that
\[
\ir2{|\nA\psi|^2}\ge\ir2{|\partial_r u|^2}+\(1-4\,a^2\)\ir2{\frac1{r^2}\,|\partial_\theta u|^2}+a^2\ir2{\frac1{r^2}\,u^2}\,.
\]
We can estimate the optimal constant $\mu(\lambda)$ in~\eqref{Ineq:MgnHS} by the optimal constant $\mu_{\mathrm{rel}}(\lambda)$ in the relaxed inequality
\[
\ir2{\(|\partial_ru|^2+{\textstyle\frac{1-\,4\,a^2}{r^2}}\,|\partial_\theta u|^2\)}\,+\(\lambda+a^2\)\ir2{\frac{|u|^2}{|x|^2}}\ge\mu_{\mathrm{rel}}(\lambda)\(\ir2{\frac{|u|^p}{|x|^2}}\)^\frac2p.
\]
Using the Emden-Fowler transformation~\eqref{EF}, this inequality can be rewritten on the cylinder $\R\times\S^1$ as
\begin{eqnarray*}
\iC{\(|\partial_sw|^2+\(1-4\,a^2\)\,|\partial_\theta w|^2\)}+\(\lambda+a^2\)\iC{|w|^2}\hspace*{1.4cm}&&\\
\ge(2\,\pi)^{\frac2p-1}\,\mu_{\mathrm{rel}}(\lambda)\(\iC{|w|^p}\)^\frac2p.&&
\end{eqnarray*}
By Lemma~\ref{Lem:Sym} applied with $\nu=1-4\,a^2$ and $\kappa=\lambda+a^2$, the optimal function for the above inequality is independent of the angular variable $\theta$ if and only if
\[
\(\lambda+a^2\)\(p^2-4\)\le4\(1-4\,a^2\)\,,
\]
that is, $\lambda\le\lambda_\star$ with $\lambda_\star$ defined by the equality case, \emph{i.e.}, by~\eqref{lambdaStar}. If $a=1/2$, note that there is no $\lambda$ such that $-\,a^2<\lambda\le\lambda_\star$. With $a\in(0,1/2)$ and $\lambda_\star$ defined by~\eqref{lambdaStar}, this amounts to $\lambda\le\lambda_\star$. In that case the optimal function~is
\[
w_\star(s):=\(\cosh\(\frac{p-2}2\,\sqrt{\lambda+a^2}\,s\)\)^{-\frac2{p-2}}\quad\forall\,s\in\R
\]
up to a multiplication by a constant and a translation (in the $s$ variable). This determines the value of $\mu_{\mathrm{rel}}(\lambda)$. By construction, we know that $\mu(\lambda)\ge\mu_{\mathrm{rel}}(\lambda)$, but using $(r,\theta)\mapsto w_\star(-\log r)$ as a test function  in~\eqref{Ineq:MgnHS}, we find that $\mu(\lambda)=\mu_{\mathrm{rel}}(\lambda)$ if $\lambda\le\lambda_\star$. See Appendix~\ref{Sec:AppendixEv} for details on the computation of $\lambda_\star$.

\medskip\noindent\emph{Step 3 -- The symmetry breaking range}. This range is the set of $\lambda$ and $a$ for which the optimal functions are not symmetric functions. Let 
\[
\mathcal E_{a,\lambda}[\psi]:=\ir2{|\nA\psi|^2}+\lambda\ir2{\frac{|\psi|^2}{|x|^2}}-\mu(\lambda)\(\ir2{\frac{|\psi|^p}{|x|^2}}\)^{2/p}.
\]
We produce a direction of instability for $\mathcal E_{a,\lambda}$ by perturbing the phase and the modulus of $\psi(x)=w_\star(-\log|x|)$ simultaneously. Let us start by some preliminary computations. Define $c_\omega(s)=\cosh(\omega\,s)$, $s_\omega(s)=\sinh(\omega\,s)$ and $\mathsf I_\alpha:=\is{c_\omega^{-\alpha}}$. An integration by parts shows that
\begin{eqnarray*}
\mathsf J_{\alpha+2}:=\is{s_\omega^2\,c_\omega^{-(\alpha+2)}}&=&-\frac1{\alpha+1}\,\frac 1\omega\is{s_\omega\(c_\omega^{-(\alpha+1)}\)'}\\
&=&\frac1{\alpha+1}\is{c_\omega^{-\alpha}}=\frac{\mathsf I_\alpha}{\alpha+1}\,.
\end{eqnarray*}
On the other hand, using the identity $c_\omega^2-s_\omega^2=1$, we obtain that
\[
\mathsf I_{\alpha+2}=\is{\(c_\omega^2-s_\omega^2\)c_\omega^{-(\alpha+2)}}=\(1-\frac1{\alpha+1}\)\mathsf I_\alpha=\frac\alpha{\alpha+1}\,\mathsf I_\alpha\,.
\]
With the choice $\mu=\(2\pi\iC{|w_\star|^p}\)^{1-2/p}$ corresponding to the optimal constant achieved by the symmetric function $w_\star$, with $s=-\,\log r$, by considering $\psi_\varepsilon(r,\theta):=\big(w_\star(s)+\varepsilon\,\varphi(s,\theta)\big)\,\exp\big(i\,\varepsilon\,\chi(s,\theta)\big)$, at order $\varepsilon^2$ we obtain that $\mathcal E_{a,\lambda}[\psi_\varepsilon]=\varepsilon^2\,\mathcal Q[\varphi,\chi]+o(\varepsilon^2)$ where $\mathcal Q$ is the quadratic form defined by
\begin{eqnarray*}
\mathcal Q[\varphi,\chi]
&=&\iC{w_\star^2\(|\partial_s\chi|^2+|\partial_\theta\chi-\,a|^2-a^2\)}-4\,a\iC{w_\star\,\varphi\,\partial_\theta\chi}\\
&&+\iC{\(|\partial_s\varphi|^2+|\partial_\theta\varphi|^2+\(\lambda+a^2\)\varphi^2\)}\\
&&-\,(p-1)\iC{|w_\star|^{p-2}\,|\varphi|^2}
\end{eqnarray*}
and
\[
w_\star(s)=\zeta_\star\,\big(c_\omega(s)\big)^{-\frac2{p-2}}\quad\mbox{with}\;\zeta_\star=\(\frac p2\(\lambda+a^2\)\)^\frac1{p-2}\;\mbox{and}\;\omega=\frac{p-2}2\,\sqrt{\lambda+a^2}\,.
\]
With the \emph{ansatz}
\be{Ansatz-phi-chi}
\varphi(s,\theta)=\big(c_\omega(s)\big)^{-\frac p{p-2}}\,\cos\theta\quad\mbox{and}\quad\chi(s,\theta)=\frac\zeta{\zeta_\star}\,\big(c_\omega(s)\big)^{-1}\sin\theta
\ee
where $\zeta$ is a parameter to be fixed later, we obtain that
\begin{eqnarray*}
\mathcal Q[\varphi,\chi]&=&\zeta^2\(\omega^2\,\mathsf J_{\alpha+2}+\mathsf I_\alpha\)-4\,\zeta\,a\,\mathsf I_\alpha\\
&&+\(\frac{p\,\omega}{p-2}\)^2\,\mathsf J_{\alpha+2}+\(1+\lambda+a^2\)\mathsf I_\alpha-(p-1)\,\zeta_\star^{p-2}\,\mathsf I_{\alpha+2}
\end{eqnarray*}
with $\alpha=2\,p/(p-2)$. We minimize the expression of $\mathcal Q[\varphi,\chi]$ with respect to $\zeta\in\R$, that is, we take
\[
\zeta=\frac{2\,a\,\mathsf I_\alpha}{\omega^2\,\mathsf J_{\alpha+2}+\mathsf I_\alpha}\,.
\]
After replacing $\alpha$, $\zeta$, $\zeta_\star$, and $\omega$ by their values in terms of $a$, $p$ and $\lambda$, we find that the infimum of the admissible parameters $\lambda>-\,a^2$ for which $\mathcal Q[\varphi,\chi]<0$ is given by~\eqref{Eqn:SB}. Hence we know that there is symmetry breaking for any $\lambda>\lambda_\bullet$. This concludes the proof of Theorem~\ref{Thm:MgnHS}.\hfill\ \qed

\begin{remark} The function $\psi_\varepsilon$ used in Step 3 of the proof of Theorem~\ref{Thm:MgnHS} to produce a negative direction of variation of $\mathcal E_{a,\lambda}$ is only a test function which couples the modulus and the phase. To get an optimal range with this method, one should identify the lowest eigenvalue in the system associated with the variation of $\mathcal Q$: see Section~\ref{Sec:System} for details. This is so far an open question as the corresponding eigenfunctions are not identified yet.
 
One may wonder if a better result could be achieved by varying only the modulus using the function $\psi_\varepsilon(r,\theta):=w_\star(-\,\log r)+\varepsilon\,\varphi(-\,\log r,\theta)$ and choosing the optimal $\varphi$. In that case, the instability is reduced to the instability of $\mathcal F_{\kappa,\nu}$ as defined by~\eqref{Fkappanu} with $\kappa=\lambda+a^2$ and $\nu=1$, which is the classical computation of~\cite{Felli-Schneider-03} (also see Appendix~\ref{Sec:AppendixEv}): here instability occurs if
\be{lambdaFS}
\lambda>\lambda_{\mathrm{FS}}(a):=\frac{4\(1+\,a^2\)-\,a^2\,p^2}{p^2-4}\,.
\ee
Elementary considerations show that $\lambda_\bullet<\lambda_{\mathrm{FS}}$ and that the threshold given by~$\lambda_\bullet$ is by far better (see Fig.~\ref{Fig1}).\end{remark}

\subsection{Spectral estimates}\label{Sec:Keller-Lieb-Thirring}

The proof of Theorem~\ref{Thm:GroundState} is a simple consequence of the estimate
\[
\EA[\psi]=\ir2{\(|\nA\psi|^2-\phi\,|\psi|^2\)}\ge\ir2{|\nA\psi|^2}-\,\vertiii\phi_q\,\(\ir2{\frac{|\psi|^p}{|x|^2}}\)^{2/p}
\]
by H\"older's inequality, with $q=p/(p-2)$. With the notation $\mu=\vertiii\phi_q$, Eq.~\eqref{KLT} is a consequence of Eq.~\eqref{Ineq:MgnHS} in Theorem~\ref{Thm:MgnHS}: the right-hand side is bounded from below by $-\,\lambda(\mu)\,\nrm{|x|^{-1}\,\psi}2^2$.

Reciprocally Theorem~\ref{Thm:MgnHS} follows from Theorem~\ref{Thm:GroundState} with $\phi=|x|^{-2}\,|\psi|^{p-2}$, which corresponds to the equality case in the above H\"older inequality.\hfill\ \qed

\appendix\section{Appendix}\label{Sec:Appendix}

\subsection{Optimal constants in the symmetric case}\label{Sec:AppendixOptCst}

It is known from~\cite{zbMATH02502560} that
\[\label{Kstar}
\mathsf K_\CKNa^\star=\frac p2\,|\CKNa|^{1+\frac2p}\(\frac{2\,\sqrt\pi\,\Gamma\big(\frac p{p-2}\big)}{(p-2)\,\Gamma\big(\frac p{p-2}+\frac12\big)}\)^{1-\frac2p}
\]
is the optimal constant in the inequality
\[
\is{|w'|^2}+\CKNa^2\is{|w|^2}\ge\mathsf K_\CKNa^\star\(\is{|w|^p}\)^{2/p}\quad\forall\,w\in\mathrm H^1(\R)\,.
\]
For any given $p>2$, the optimal constant in~\eqref{CKN3} is also $\mathsf K_\CKNa^\star$ in the particular case of symmetric functions, because $d\sigma$ is the uniform probability measure on~$\S^1$. See for instance~\cite{delPino20102045} for details. 

The optimal constants in~\eqref{CKN2} and~\eqref{CKN3} are related by~\eqref{CK}. In the symmetry range, we have $\mathsf K_\CKNa=\mathsf K_\CKNa^\star=(2\,\pi)^{\frac2p-1}\,\mathsf C_\CKNa^\star=(2\,\pi)^{\frac2p-1}\,\mathsf C_\CKNa$, where
\[
\mathsf C_\CKNa^\star=\frac p2\,(2\,\pi)^{1-\frac2p}\,|\CKNa|^{1+\frac2p}\(\frac{2\,\sqrt\pi\,\Gamma\big(\frac p{p-2}\big)}{(p-2)\,\Gamma\big(\frac p{p-2}+\frac12\big)}\)^{1-\frac2p}\,.
\]
This expression can be recovered by writing that the equality case in~\eqref{CKN2} is achieved by the function $v_\star$. Indeed, with the change of variables $(r,\theta)\mapsto(s,\theta)$ with $s=r^{-\CKNa\,(p-2)/2}$ and $n=2\,p/(p-2)$ as in~\cite[Section~3.1]{MR3570296}, and $f_\star(s)=v_\star(r)$, this means that
\[
\mathsf C_\CKNa^\star=(2\,\pi)^{1-\frac2p}\(\frac\CKNa2\,(p-2)\)^{1+\frac2p}\,\frac{\int_0^{+\infty}|f_\star'|^2\,s^{n-1}\,ds}{\(\int_0^{+\infty}|f_\star|^p\,s^{n-1}\,ds\)^{2/p}}
\]
where $f_\star$ is the Aubin-Talenti function $f_\star(s)=\(1+s^2\)^{-(n-2)/2}$.

\subsection{Ground state eigenvalues of the quadratic form}\label{Sec:AppendixEv}

\subsubsection*{$\bullet$ Linearization and eigenvalues: the one-dimensional case}
Let us summarize some classical results on the linearization of the Gagliardo-Nirenberg inequalities in the one-dimensional case, based on~ \cite[Appendix~A.2]{0951-7715-27-3-435}. According, \emph{e.g.}, to~\cite{Dolbeault06082014}, the function $\overline w(s)=(\cosh s)^{-\frac 2{p-2}}$ is the unique positive solution of
\[
-\,(p-2)^2\,\overline w''+4\,\overline w-2\,p\,\overline w^{p-1}=0
\]
on $\R$, up to translations. The function $w(s):=\alpha\,\overline w(\beta\,s)$ solves
\[
-\,w''+\frac{4\,\beta^2}{(p-2)^2}\,w-\frac{2\,p\,\beta^2}{(p-2)^2}\,\alpha^{2-p}\,w^{p-1}=0\,.
\]
With $\beta=\frac{p-2}2\,\sqrt\kappa$ and $\alpha=(\frac p2\,\kappa)^\frac1{p-2}$, $w=\alpha\,w_\star$ is given by
\[
w(s)=\(\frac p2\,\kappa\)^\frac1{p-2}\left[\cosh\(\frac{p-2}2\,\sqrt\kappa\,s\)\right]^{-\frac 2{p-2}}\quad\forall\,s\in\R
\]
and solves
\[
-\,w''+\kappa\,w=|w|^{p-2}\,w\,.
\]

The ground state energy $\lambda_1(\mathcal H_\kappa)$ of the P\"oschl-Teller operator
\[
\mathcal H_\kappa:=-\,\frac{d^2}{ds^2}+\,\kappa-\,(p-1)\,w^{p-2}
\]
is characterized as follows. The function
\[
\varphi_1(s):=\alpha^\frac p2\,\big(\cosh(\beta\,s)\big)^{-\frac p{p-2}}=w^\frac p2
\]
solves
\[
-\,\varphi_1''+\frac14\,\kappa\,p^2\,\varphi_1-\,(p-1)\,w^{p-2}\,\varphi_1=0
\]
and therefore provides the principal eigenvalue of $\mathcal H_\kappa$,
\[
\lambda_1(\mathcal H_\kappa)=-\,\frac\kappa4\(p^2-\,4\)\,.
\]
The Sturm-Liouville theory guarantees that $\varphi_1$ generates the ground state.

\subsubsection*{$\bullet$ The lowest non-radial mode on the cylinder}
Let us consider the operator
\[
\mathcal H_{\kappa,\nu}:=-\,\frac{\partial^2}{\partial s^2}-\,\nu\,\frac{\partial^2}{\partial\theta^2}+\,\kappa-\,(p-1)\,w^{p-2}
\]
on the cylinder $\R\times\S^1\ni(s,\theta)$. By separation of variables, the lowest non-symmetric eigenvalue is associated with the function $\varphi(s,\theta)=\varphi_1(s)\,\cos\theta$, so that the ground state of $\mathcal H_{\kappa,\nu}$ is
\[
\lambda_1(\mathcal H_{\kappa,\nu})=\nu-\,\frac\kappa4\(p^2-\,4\)\,.
\]

\subsubsection*{$\bullet$ Lowest eigenvalues and threshold for the linear instability}
Whenever the optimal function in~\eqref{Ineq:MgnHS} is radially symmetric, we get that $\mu(\lambda)=\mathsf C_\CKNa^\star$ with $\CKNa=\sqrt{a^2+\lambda}$, \emph{i.e.}, $\mu(\lambda)=(2\,\pi)^{1-\frac2p}\,\mathsf k^\star(\lambda)$, or
\be{mu-lambda}
\mu(\lambda)=\frac p2\,(2\,\pi)^{1-\frac2p}\(\lambda+a^2\)^{\frac12+\frac1p}\(\frac{2\,\sqrt\pi\,\Gamma\big(\frac p{p-2}\big)}{(p-2)\,\Gamma\big(\frac p{p-2}+\frac12\big)}\)^{1-\frac2p}\,.
\ee
With $\mathcal F_{\kappa,\nu}$ defined by~\eqref{Fkappanu}, let us consider a Taylor expansion of $\mathcal F_{\kappa,\nu}[w_\varepsilon]$ with $w_\varepsilon(s,\theta):=w_\star(s)+\varepsilon\,\varphi(s,\theta)$ at order two with respect to $\varepsilon$. For $\varepsilon>0$ small enough, the sign of $\mathcal F_{\kappa,\nu}[w_\varepsilon]$ is determined by the sign of the quadratic form
 \begin{eqnarray*}
\varphi\mapsto\iC{\(|\partial_s\varphi|^2+\nu\,|\partial_\theta\varphi|^2+\kappa\,\varphi^2\)}\hspace*{4cm}&&\\
-\,(2\,\pi)^{\frac2p-1}\,(p-1)\,\mu\(\iC{w_\star^p}\)^{\frac2p-1}\iC{w_\star^{p-2}\,|\varphi|^2}\,.&&
\end{eqnarray*}
Hence $\mathcal F_{\kappa,\nu}$ can be made negative by choosing $\varphi=\varphi_1$, which shows that $w_\star$ is an instable critical point of $\mathcal F_{\kappa,\nu}$ if and only if $\lambda_1(\mathcal H_{\kappa,\nu})<0$. Notice that Lemma~\ref{Lem:Sym} states the reverse result, which is the difficult part of the result: whenever $\lambda_1(\mathcal H_{\kappa,\nu})\ge0$, the minimum of $\mathcal F_{\kappa,\nu}$ is achieved by $w_\star$ so that $\mathcal F_{\kappa,\nu}\ge\mathcal F_{\kappa,\nu}[w_\star]\ge0$.

Applied with $\kappa=\lambda+a^2$ and $\nu=1$, we recover the computation of~\cite{Felli-Schneider-03}, which determines $\lambda_{\mathrm{FS}}$ as in~\eqref{lambdaFS}. Applied with $\kappa=\lambda+a^2$ and $\nu=1-\,4\,a^2$, we obtain that
\[
\lambda_1(\mathcal H_{\lambda+a^2,1})-\,4\,a^2=1-\,\frac14\(\lambda+a^2\)\(p^2-\,4\)-\,4\,a^2
\]
and observe that it is negative if and only if
\[
\lambda_\star(a)=\frac{4\(1-3\,a^2\)-\,a^2\,p^2}{p^2-4}\,.
\]
Using $\mu(\lambda)=(2\,\pi)^{1-\frac2p}\,\mathsf k^\star(\lambda)$ in the symmetry range, we obtain that $\mu_\star(a)=\mu\big(\lambda_\star(a)\big)$ with $\mu$ given by~\eqref{mu-lambda}, \emph{i.e.},
\[
\mu_\star(a)=2\,p\(\frac{1-4\,a^2}{p^2-4}\)^{\frac12+\frac1p}\pi^{\frac32-\frac3p}
\(\frac{2\,\Gamma\big(\frac p{p-2}\big)}{(p-2)\,\Gamma\big(\frac p{p-2}+\frac12\big)}\)^{1-\frac2p}\,.
\]

\subsection{Computation of \texorpdfstring{$\lambda_\bullet$}{lambda-bullet}}\label{Sec:lambda-bullet}

Let us give some details on the computation of $\lambda_\bullet$. An expansion of $\mathcal Q[\varphi,\chi]$ as defined in Section~\ref{Sec:Thm2} computed with the ansatz~\eqref{Ansatz-phi-chi} shows that it has the sign of
\[
\textstyle\mathsf q(\lambda):=-\,\lambda^2-\,2\(4\,\frac{p^2+4\,p-4}{(p-2)^3\,(p+2)}+\,a^2\)\lambda+\,8\,\frac{2\,(3\,p-2)-a^2\(p^3+2\,p^2+12\,p-8\)}{(p-2)^3\,(p+2)}-a^4\,.
\]
Since $\mathsf q(\lambda_\star)=\big(\frac{8\,a}{p^2-4}\big)^2\,(1-4\,a^2)$ is positive for any $a\in(0,1/2)$ and since $\lim_{\lambda\to\infty}\mathsf q(\lambda)=-\infty$, we know that~$\lambda_\bullet$ defined by $\mathsf q(\lambda_\bullet)=0$ is such that $\lambda_\bullet>\lambda_\star$. Notice that the other root of $\mathsf q(\lambda)=0$ is in the range $(-\infty,-a^2)$, and that the discriminant $p^4-a^2\,(p-2)^2\,(p+2)\,(3\,p-2)$ is positive for any $(a,p)\in(0,1/2)\times(2,+\infty)$. Additionally, we obtain by direct computation that
\[
\textstyle\lambda_\bullet-\lambda_\star=\frac8{(p-2)^3\,(p+2)}\(\sqrt{p^4-a^2\,(p-2)^2\,(3\,p^2+4\,p-4)}+2\,a^2\,(p-2)^2-p^2\)
\]
is positive for any $a\in(0,1/2)$. Numerically this difference turns out to be very small: see Figs.~\ref{Fig1} and~\ref{Fig2}.

\begin{figure}[ht]
\begin{center}
\includegraphics[width=6cm,height=3.5cm]{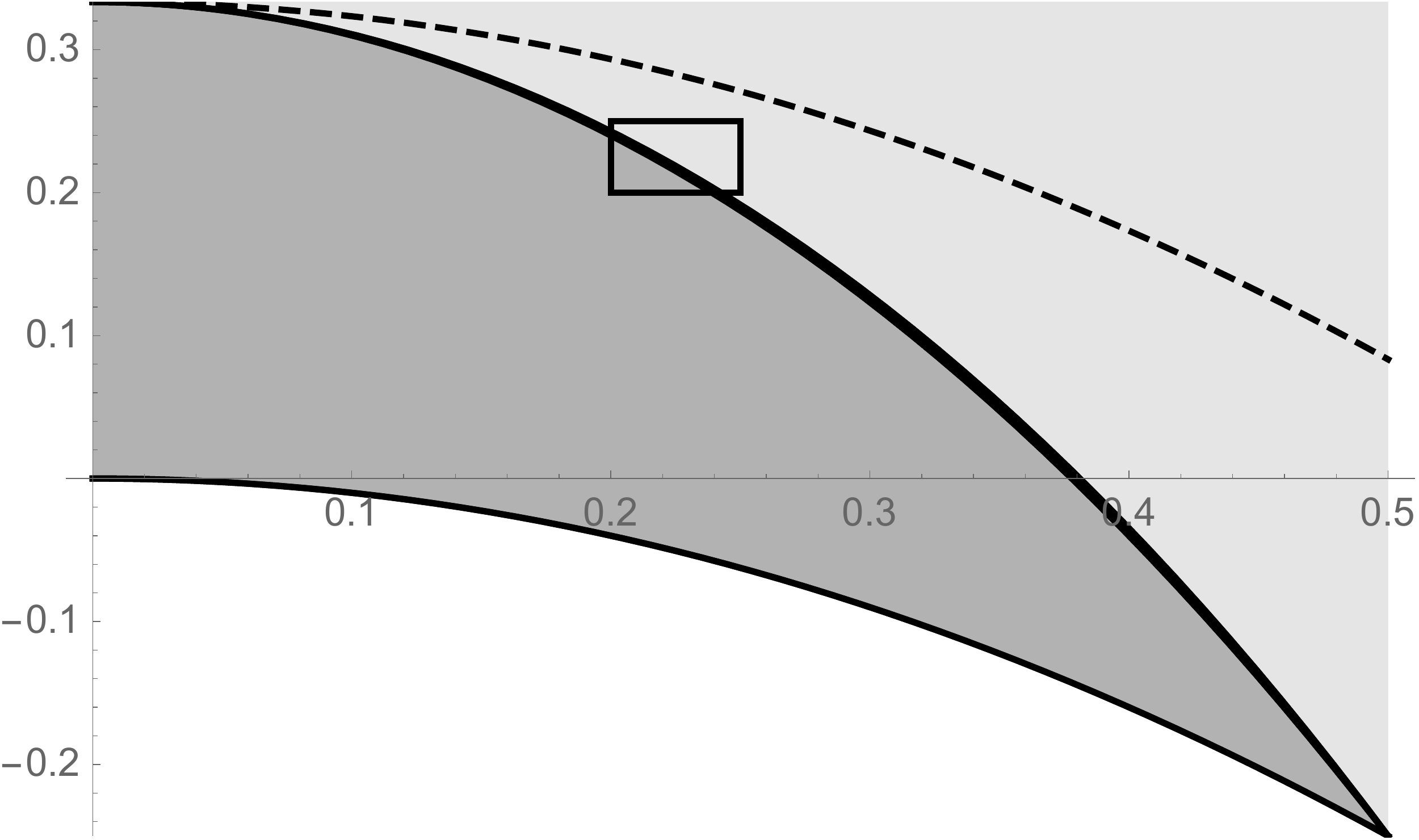}\hspace*{6pt}\includegraphics[width=6cm,height=3.5cm]{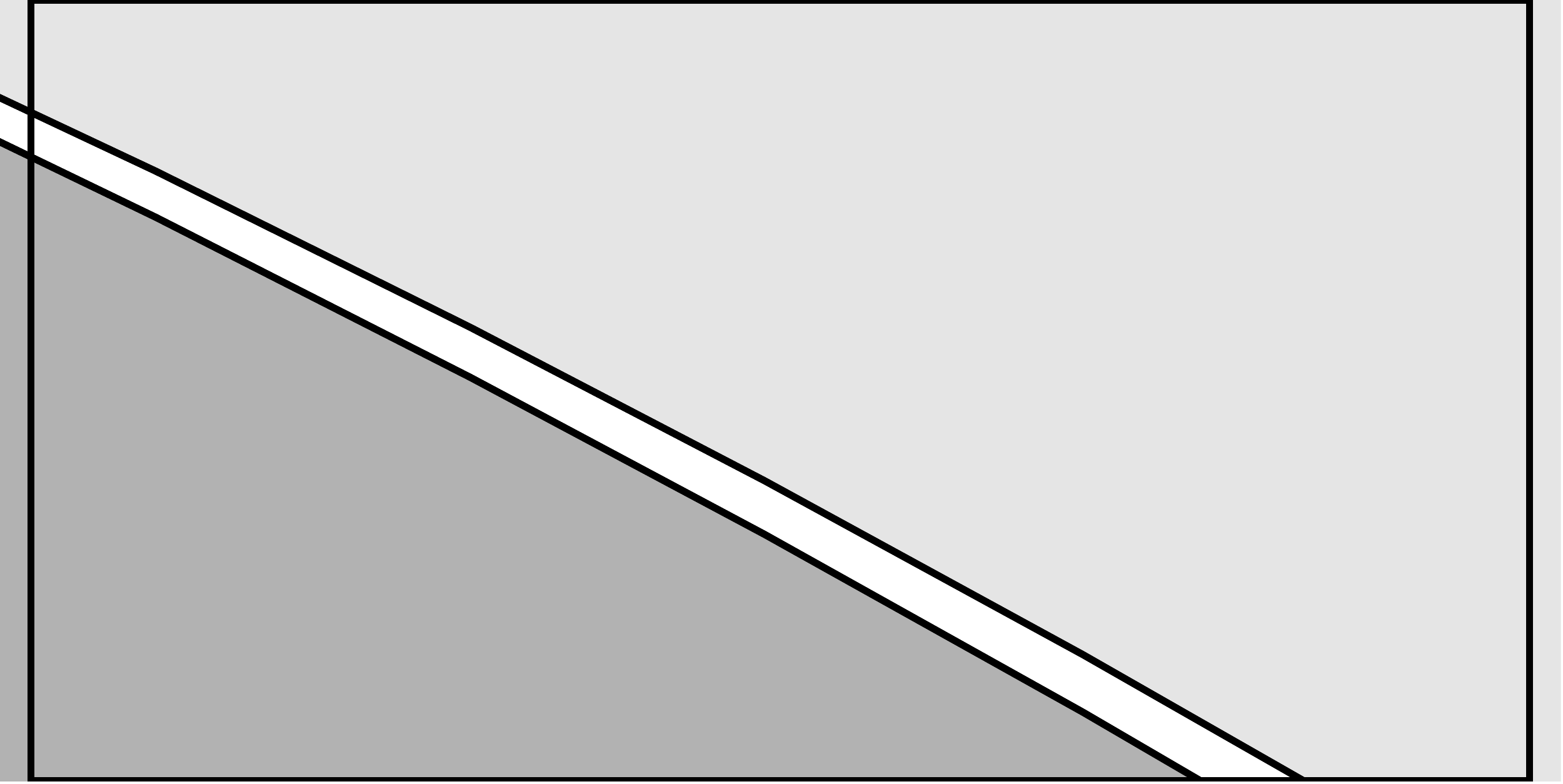}
\caption{\label{Fig1} Here we assume that $a\in(0,1/2)$ and consider the case $p=4$. Left: The region of symmetry is the dark grey area which lies between the curves $a\mapsto-\,a^2$ and $a\mapsto\lambda_\star(a)$. The light grey area above $a\mapsto\lambda_\bullet(a)$ is the region of symmetry breaking. The curve $a\mapsto\lambda_{\mathrm{FS}}(a)$ is the dashed curve, above which the symmetry breaking is shown by considering only a perturbation of the modulus. It is a poor estimate away from a neighborhood of $a=0$. Right: An enlargement of the boxed area shows that $\lambda_\bullet$ and $\lambda_\star$ do not coincide. Also see Fig.~\ref{Fig2}.}
\end{center}
\end{figure}

\begin{figure}[ht]
\begin{center}
\includegraphics[width=6cm,height=4cm]{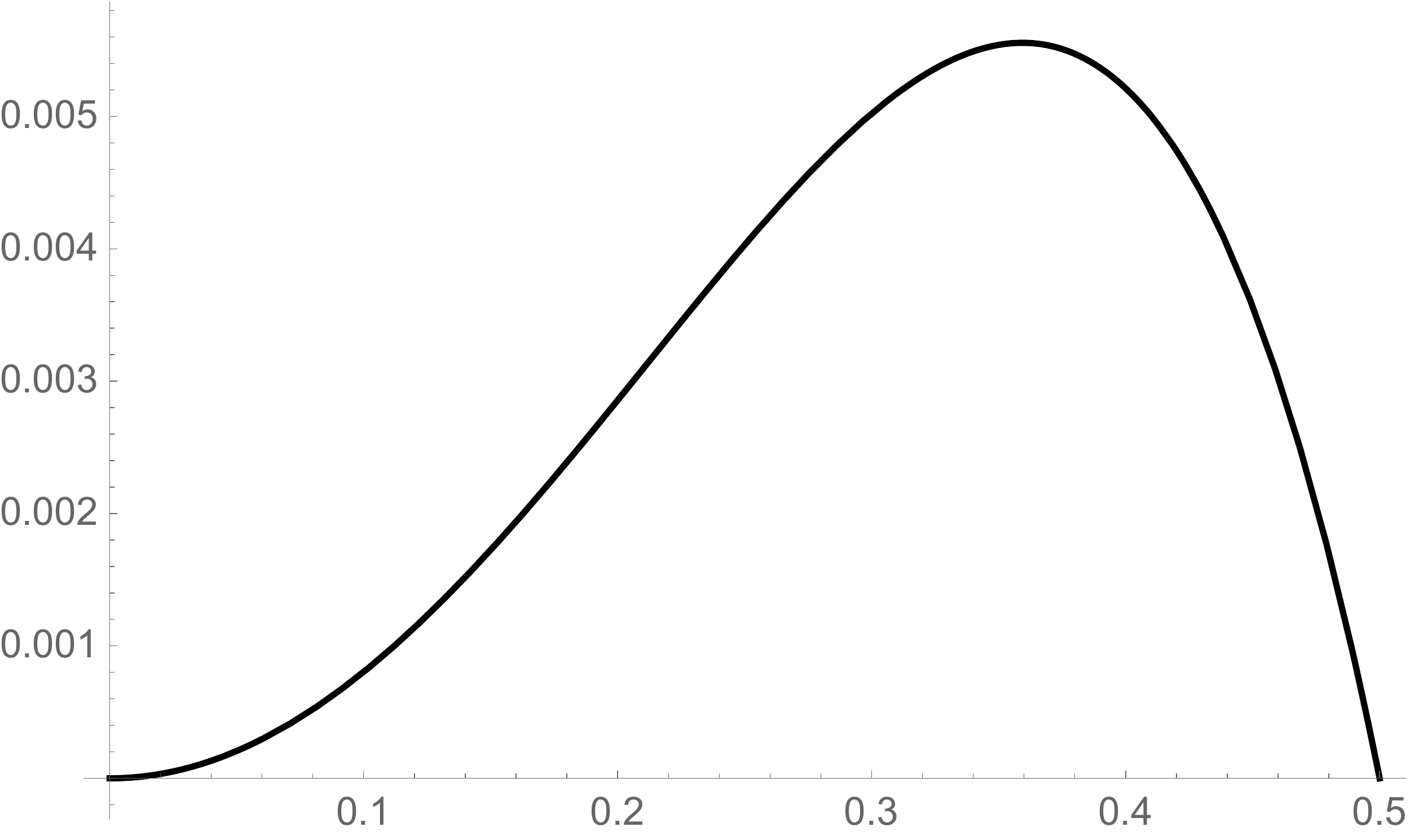}
\caption{\label{Fig2} The curve $a\mapsto\lambda_\bullet(a)-\lambda_\star(a)$ with $p=4$ shows that there is a little gap between the symmetry and the symmetry breaking region, which is to be expected because $\lambda_\star$ is determined by a non-optimal test function.}
\end{center}
\end{figure}

\subsection{The range of linear instability of the magnetic interpolation inequality}\label{Sec:System}

The high level of accuracy shown in Fig.~\ref{Fig2} deserves some comments. For any given $(a,p)\in(0,1/2)\times(2,+\infty)$, the threshold between the symmetry region and the symmetry breaking region is in the interval $(\lambda_\star,\lambda_\bullet)$. Since $\lambda_\bullet$ is determined by the choice of~\eqref{Ansatz-phi-chi}, one has to understand why this test function gives such a precise estimate.

Let us consider an \emph{ansatz} in which only the angular dependence is fixed. With
\[
\varphi(s,\theta)=H(z)\,\cos\theta\quad\mbox{and}\quad\chi(s,\theta)=G(z)\,\frac{\sin\theta}{w_\star(s)}
\]
and the change of variables
\[
z=\tanh\(\omega\,s\)\quad\mbox{and}\quad\omega=\frac{p-2}2\,\sqrt{\lambda+a^2}\,,
\]
the computation of $\mathcal Q[\varphi,\chi]$ is reduced to the computation of
\begin{eqnarray*}\textstyle
&&\hspace*{-18pt}\mathsf Q(G,H)\\
&:=&\textstyle\iz{\(\omega^2\(1-z^2\)\,|G'|^2+\(1+\frac{4\,\omega^2}{(p-2)^2}\)\frac{G^2}{1-z^2}-\frac{2\,p\,\omega^2}{(p-2)^2}\,G^2-\, \frac{4\,a}{1-z^2}\,G\,H\)}\\
&&\textstyle+\iz{\(\omega^2\(1-z^2\)\,|H'|^2+\(1+\frac{4\,\omega^2}{(p-2)^2}\)\frac{H^2}{1-z^2}-(p-1)\,\frac{2\,p\,\omega^2}{(p-2)^2}\,H^2\)}
\end{eqnarray*}
using
\[
\frac{dz}{ds}=\omega\(1-z^2\)\,,\quad\lambda+a^2=\frac{4\,\omega^2}{(p-2)^2}\quad\mbox{and}\quad w_\star^{p-2}=\frac{2\,p\,\omega^2}{(p-2)^2}\(1-z^2\)\,.
\]
We recover the expression of $\mathsf q(\lambda)$ with the choice
\[
G[z]=\zeta\,H[z]\quad\mbox{and}\quad H(z)=\(1-z^2\)^{\frac p{2\,(p-2)}}\,,
\]
after optimizing on $\zeta$. All computations done, the optimal value of $\zeta$ is
\[
\zeta=\frac{a\,(p+2)\,(3\,p-2)}{p^2+\sqrt{p^4-a^2\,(p-2)^2
\,(p+2)\,(3\,p-2)}}\,,
\]
and we find that $\mathsf q(\lambda)<0$ for $\lambda$ in the admissible range if and only if $\lambda>\lambda_\bullet$.

A minimization of $\mathsf Q(G,H)$ under the constraint
\[
\iz{\frac{G^2+H^2}{1-z^2}}=1
\]
reduces the problem to the identification of the ground state energy $\Lambda$ in the eigenvalue problem
\[
\left\{\begin{array}{l}
-\,\omega^2\!\(\(1-z^2\)G'\)'\!+\!\(1\!+\!\frac{4\omega^2}{(p-2)^2}\)\frac G{1-z^2}- \frac{2p\omega}{(p-2)^2} G-\frac{2a}{1-z^2}H=\frac\Lambda{1-z^2}G\,,\\[8pt]
-\,\omega^2\!\(\(1-z^2\)H'\)'\!+\!\(1\!+\!\frac{4\omega^2}{(p-2)^2}\)\frac H{1-z^2}-(p-1)\frac{2p\omega}{(p-2)^2}H-\frac{2a}{1-z^2}G=\frac\Lambda{1-z^2}H\,.
\end{array}\right.
\]
For given $(a,p)\in(0,1/2)\times(2,+\infty)$, the linear instability range $\mathcal I$ is the set of the parameters~$\lambda$ for which $\Lambda$ is negative. We know that
\[
(\lambda_\bullet,+\infty)\subset\mathcal I\subset(\lambda_\star,+\infty)
\]
but we do not even know whether $\mathcal I$ is an interval or not. Notice that for $a=1/2$, we find that $\zeta=1$ and $G=H$ is a good test function for any $\omega>0$: this means that there is symmetry breaking for any $\lambda>1/4$.

\subsection*{Acknowledgments}\begin{spacing}{0.9}{\small This research has been partially supported by the project \emph{EFI}, contract~ANR-17-CE40-0030 (D.B., J.D.) of the French National Research Agency (ANR), by the NSF grant DMS-1600560 (M.L.), and by the PDR (FNRS) grant T.1110.14F and the ERC AdG 2013 339958 ``Complex Patterns for Strongly Interacting Dynamical Systems - COMPAT'' grant (D.B.). The authors thank the referees for a careful reading which helped to remove some typos and improve the notations.
\\[2pt]\scriptsize
\copyright\,2019 by the authors. This paper may be reproduced, in its entirety, for non-commercial purposes.}\end{spacing}


\begin{thebibliography}{10}

\bibitem{Aubin-76}
{\sc T.~Aubin}, {\em Probl\`emes isop\'erim\'etriques et espaces de {S}obolev},
  J. Differential Geometry, 11 (1976), pp.~573--598.

\bibitem{BONHEURE2018}
{\sc D.~Bonheure, M.~Nys, and J.~Van~Schaftingen}, {\em Properties of ground
  states of nonlinear {S}chr\"{o}dinger equations under a weak constant
  magnetic field}, J. Math. Pures Appl. (9), 124 (2019), pp.~123--168.

\bibitem{MR3665549}
{\sc T.~Boulenger and E.~Lenzmann}, {\em Blowup for biharmonic {NLS}}, Ann.
  Sci. \'Ec. Norm. Sup{\'e}r. (4), 50 (2017), pp.~503--544.

\bibitem{Caffarelli-Kohn-Nirenberg-84}
{\sc L.~Caffarelli, R.~Kohn, and L.~Nirenberg}, {\em First order interpolation
  inequalities with weights}, Compositio Math., 53 (1984), pp.~259--275.

\bibitem{Catrina-Wang-01}
{\sc F.~Catrina and Z.-Q. Wang}, {\em On the {C}affarelli-{K}ohn-{N}irenberg
  inequalities: sharp constants, existence (and nonexistence), and symmetry of
  extremal functions}, Comm. Pure Appl. Math., 54 (2001), pp.~229--258.

\bibitem{MR1940370}
{\sc M.~Del~Pino and J.~Dolbeault}, {\em Best constants for
  {G}agliardo-{N}irenberg inequalities and applications to nonlinear
  diffusions}, J. Math. Pures Appl. (9), 81 (2002), pp.~847--875.

\bibitem{delPino20102045}
{\sc M.~del Pino, J.~Dolbeault, S.~Filippas, and A.~Tertikas}, {\em A
  logarithmic {H}ardy inequality}, Journal of Functional Analysis, 259 (2010),
  pp.~2045 -- 2072.

\bibitem{MR2966111}
{\sc J.~Dolbeault and M.~J. Esteban}, {\em Extremal functions for
  {C}affarelli-{K}ohn-{N}irenberg and logarithmic {H}ardy inequalities}, Proc.
  Roy. Soc. Edinburgh Sect. A, 142 (2012), pp.~745--767.

\bibitem{0951-7715-27-3-435}
\leavevmode\vrule height 2pt depth -1.6pt width 23pt, {\em Branches of
  non-symmetric critical points and symmetry breaking in nonlinear elliptic
  partial differential equations}, Nonlinearity, 27 (2014), p.~435.

\bibitem{Dolbeault06082014}
{\sc J.~Dolbeault, M.~J. Esteban, A.~Laptev, and M.~Loss}, {\em One-dimensional
  {G}agliardo--{N}irenberg--{S}obolev inequalities: remarks on duality and
  flows}, Journal of the London Mathematical Society, 90 (2014), pp.~525--550.

\bibitem{MR3784917}
\leavevmode\vrule height 2pt depth -1.6pt width 23pt, {\em Interpolation
  inequalities and spectral estimates for magnetic operators}, Ann. Henri
  Poincar\'{e}, 19 (2018), pp.~1439--1463.

\bibitem{doi:10.1063/1.5022121}
{\sc J.~Dolbeault, M.~J. Esteban, A.~Laptev, and M.~Loss}, {\em Magnetic
  rings}, Journal of Mathematical Physics, 59 (2018), p.~051504.

\bibitem{MR3570296}
{\sc J.~Dolbeault, M.~J. Esteban, and M.~Loss}, {\em Rigidity versus symmetry
  breaking via nonlinear flows on cylinders and {E}uclidean spaces}, Invent.
  Math., 206 (2016), pp.~397--440.

\bibitem{1703}
\leavevmode\vrule height 2pt depth -1.6pt width 23pt, {\em Symmetry and
  symmetry breaking: rigidity and flows in elliptic {PDE}s.}, Proc. Int. Cong.
  of Math. 2018, Rio de Janeiro, 3 (2018), pp.~2279--2304.

\bibitem{MR3612700}
{\sc J.~Dolbeault, M.~J. Esteban, M.~Loss, and M.~Muratori}, {\em Symmetry for
  extremal functions in subcritical {C}affarelli-{K}ohn-{N}irenberg
  inequalities}, C. R. Math. Acad. Sci. Paris, 355 (2017), pp.~133--154.

\bibitem{DELT09}
{\sc J.~Dolbeault, M.~J. Esteban, M.~Loss, and G.~Tarantello}, {\em On the
  symmetry of extremals for the {C}affarelli-{K}ohn-{N}irenberg inequalities},
  Adv. Nonlinear Stud., 9 (2009), pp.~713--726.

\bibitem{Erdos96}
{\sc L.~Erd{\H{o}}s}, {\em Rayleigh-type isoperimetric inequality with a
  homogeneous magnetic field}, Calculus of Variations and Partial Differential
  Equations, 4 (1996), pp.~283--292.

\bibitem{MR1708787}
{\sc P.~Exner, E.~M. Harrell, and M.~Loss}, {\em Optimal eigenvalues for some
  {L}aplacians and {S}chr{\"o}dinger operators depending on curvature}, in
  Mathematical results in quantum mechanics ({P}rague, 1998), vol.~108 of Oper.
  Theory Adv. Appl., Birkh{\"a}user, Basel, 1999, pp.~47--58.

\bibitem{Felli-Schneider-03}
{\sc V.~Felli and M.~Schneider}, {\em Perturbation results of critical elliptic
  equations of {C}affa\-relli-{K}ohn-{N}irenberg type}, J. Differential
  Equations, 191 (2003), pp.~121--142.

\bibitem{Ilyin}
{\sc V.~P. Il'in}, {\em Some integral inequalities and their applications in
  the theory of differentiable functions of several variables}, Mat. Sb.
  (N.S.), 54 (96) (1961), pp.~331--380.

\bibitem{MR1708811}
{\sc A.~Laptev and T.~Weidl}, {\em Hardy inequalities for magnetic {D}irichlet
  forms}, in Mathematical results in quantum mechanics ({P}rague, 1998),
  vol.~108 of Oper. Theory Adv. Appl., Birkh{\"a}user, Basel, 1999,
  pp.~299--305.

\bibitem{2018arXiv180506294L}
{\sc E.~{Lenzmann} and J.~{Sok}}, {\em {A sharp rearrangement principle in
  Fourier space and symmetry results for PDEs with arbitrary order}}, ArXiv
  e-prints,  (2018).

\bibitem{Lieb-83}
{\sc E.~H. Lieb}, {\em Sharp constants in the {H}ardy-{L}ittlewood-{S}obolev
  and related inequalities}, Ann. of Math. (2), 118 (1983), pp.~349--374.

\bibitem{MR2001882}
{\sc D.~Smets and M.~Willem}, {\em Partial symmetry and asymptotic behavior for
  some elliptic variational problems}, Calc. Var. Partial Differential
  Equations, 18 (2003), pp.~57--75.

\bibitem{zbMATH02502560}
{\sc B.~Sz{\"o}kefalvi-Nagy}, {\em {\"U}ber {I}ntegralungleichungen zwischen
  einer {F}unktion und ihrer {A}bleitung}, {Acta Sci. Math.}, 10 (1941),
  pp.~64--74.

\bibitem{Talenti-76}
{\sc G.~Talenti}, {\em Best constant in {S}obolev inequality}, Ann. Mat. Pura
  Appl. (4), 110 (1976), pp.~353--372.

\end{thebibliography}

\end{document}